\title{Mixed Pentagon, octagon and Broadhurst duality equation}
\author{Benjamin Enriquez}
\author{Hidekazu Furusho}
\address{Institut de Recherche Math\'{e}matique Avanc\'{e}e, UMR 7501, 
Universit\'{e} de Strasbourg et CNRS, 7
rue Ren\'{e} Descartes, 67000 Strasbourg, France}
\email{enriquez@math.unistra.fr}
\address{Graduate School of Mathematics, Nagoya University, 
Furo-cho, Chikusa-ku, Nagoya, 464-8602, Japan}
\email{furusho@math.nagoya-u.ac.jp}
\newtheorem{thm}{Theorem}[section]
\newtheorem{lem}[thm]{Lemma}
\newtheorem{cor}[thm]{Corollary}
\newtheorem{prop}[thm]{Proposition}  
\theoremstyle{remark}
\theoremstyle{definition}
\newtheorem{thm-defn}[thm]{Theorem-Definition} 
\newtheorem{defn}[thm]{Definition}
\newtheorem{rem}[thm]{Remark}
\newtheorem{q}[thm]{Question}
\numberwithin{equation}{section}
\begin{document}
\bibliographystyle{amsalpha+}
\maketitle
\begin{abstract}
This paper is on elimination of defining equations of
the cyclotomic analogues, introduced by the first author,
of Drinfeld's scheme of associators.
We show that the mixed pentagon equation implies 
the octagon equation for $N=2$ and
the particular distribution relation. 
We also explain that Broadhurst duality is compatible with the torsor structure.
We develop a formalism of infinitesimal module categories and
use it for deriving a proof left implicit in the first named author's earlier work.
\end{abstract}

\tableofcontents

\setcounter{section}{-1}
\section{Introduction}
The ``Grothendieck-Teichm\"{u}ller theory" was developed by Drinfeld
\cite{Dr} with the motivation
of quantization of certain Hopf algebras related with the monodromy of
the KZ differential system
(Kohno-Drinfeld theorem), and in close relation with
Grothendieck's approach to the
description of the action of the
absolute Galois group of the rational number field $\bf Q$
on the``Teichm\"{u}ller tower"
(\cite{Gr}). 
One of the main results of this theory is a collection
of relations between periods of 
$\bold P^1\backslash\{0,1,\infty\}$ called the MZV's (multiple zeta values)
(see \eqref{multiple zeta value}).
These relations are derived from the study of the monodromy of the
KZ system, and fall in three classes : two classes of hexagon
\eqref{hexagons-M} and one
class of pentagon relations \eqref{pentagon-GRT}.
The elimination of the hexagon relations
(i.e., the statement that they are consequences of the pentagon relation) was
established by the second-named author in \cite{F2} (see theorem
\ref{Furusho paper} below). 
The proof uses combinatorial
arguments based on the cell decomposition of the compactification of the
moduli space $\frak M_{0,5}$.


The Grothendieck-Teichm\"{u}ller theory was extended in the cyclotomic context by
the first-named author [E]. In this theory, an integer $N \geqslant 1$ is fixed and
the analogues of the MZV's are periods of 
the motivic fundamental group
the algebraic curve
$\bold P^1\backslash\{0,\mu_N,\infty\}$
($\mu_N$: the group of $N$-th roots of unity)
called multiple $L$-values (see \eqref{multiple L-value}).
The study of the monodromy of the cyclotomic KZ system yields
a collection of relations between these numbers. It is shown that
the pro-algebraic variety $\mathrm{Psdist}(N,\bf k)$  over $\bf k$
($\bf k$: a field of characteristic 0)
arising from the `cyclotomic KZ' relations
is equipped with a torsor structure over pro-$\bf k$-algebraic group  $GRTMD(N,\bf k)$, which is the
extension of $(\bold Z/N\bold Z)^\times \times \bold G_m$
by a prounipotent $\bf k$-algebraic group , and whose Lie
$\bf k$-algebra $\frak{grtmd}(N, \bf k)$ is non-negatively graded.
Another family of relations between MZV's,
the `double shuffle and regularization relations' were studied in \cite{IKZ,R}; cyclotomic analogues of
these relations were discussed in \cite{R}.
In \cite{F3, F4}, it was proved that these
relations are consequences of the `KZ' and `cyclotomic KZ' relations.

For particular values of $N$, exceptional symmetries of
${\bf  P}^1\backslash\{0,\infty,\mu_N\}$ give
rise to additional families of relations. When $N=2$,
these relations were made explicit by Broadhurst (\cite{B}), 
and for N=4, by Okuda (\cite{O}); Okuda's relations
allow one to recover Broadhurst's (see \cite{O} \S 4).
The results of this paper are of four types :
\begin{enumerate}
\renewcommand{\theenumi}{\Alph{enumi}}
\item elimination results for `cyclotomic KZ' relations between multiple $L$-values
\item elimination of defining conditions for pro-algebraic groups and Lie algebras
\item insertion of Broadhurst's result in the framework of torsors
\item a theory of infinitesimal module categories, leading to a proof of
a result left implicit in [E], and which is also used in the proof
of (B).
\end{enumerate}

We now explain these results in more detail.

(A). The `cyclotomic KZ' relations fall in the following classes : the mixed pentagon \eqref{mixed pentagon-GRTM}, octagon \eqref{octagon-Pseudo} and distribution \eqref{distribution} classes ; there is
one class of distribution relations for each $N'$ dividing $N$, $N'\neq N$.
Our first result is the implication of the first distribution relation from the 
mixed pentagon equation:

\begin{thm}[{\bf Proposition \ref{mixed pentagon and distribution}}]
If a pair of two group-like elements $(g,h)\in\exp\frak t^0_3\times\exp\frak t^0_{3,N}$ (for the notations see below)
satisfies the mixed pentagon equation \eqref{mixed pentagon-GRTM},
then it also satisfies the 
distribution relation \eqref{distribution} for $N'=1$.
\end{thm}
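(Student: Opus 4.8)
The plan is to obtain the $N'=1$ distribution relation \eqref{distribution} as the image of the mixed pentagon equation \eqref{mixed pentagon-GRTM} under a degeneration homomorphism of cyclotomic braid Lie algebras. Geometrically, the distribution relation for $N'=1$ is the one attached to the $N$-fold covering $z\mapsto z^N$ of $\bold P^1\setminus\{0,1,\infty\}$ by $\bold P^1\setminus\{0,\mu_N,\infty\}$, whose push-forward relates the cyclotomic element $h$ to the non-cyclotomic element $g$. I would therefore write the mixed pentagon as a vanishing product of five group-like factors in $\exp\frak t_{4,N}$, each an image of $g$ or of $h$ under a standard strand-insertion homomorphism, and then apply the algebraic map realizing this covering in order to collapse the five-term identity to the two-term relation \eqref{distribution}.

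First I would recall \eqref{mixed pentagon-GRTM} explicitly, recording which of its five factors are images of $g$ under maps $\frak t_3^0\to\frak t_{4,N}$ and which are images of $h$ under maps $\frak t_{3,N}^0\to\frak t_{4,N}$. Second, I would construct the degeneration homomorphism $\partial\colon\frak t_{4,N}\to\frak t_{3,N}^0$ attached to the boundary stratum where one marked point approaches the special puncture, and check its compatibility with the insertion maps defining the five factors; composed with the norm over the deck group $\mu_N$, it realizes the push-forward of $z\mapsto z^N$ and lands in $\exp\frak t_3^0$. Third, I would apply $\exp\partial$ to the mixed pentagon: some factors degenerate to $1$ because one of their insertion arguments vanishes, while the surviving factors reassemble into the two sides of \eqref{distribution}, relating the averaged image of $h$ to $g$. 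Reading off this identity gives the claim; note that only the mixed pentagon, and no separate pentagon or hexagon hypothesis on $g$, is used.

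The step I expect to be the main obstacle is the second one: pinning down $\partial$ and computing the image of each of the five factors. One must verify that the specialization of the roots-of-unity labels is exactly the one occurring in \eqref{distribution}, that the correct factors trivialize with their arguments vanishing under $\partial$, and that no spurious term survives — a careful bookkeeping of the insertion maps together with the grading. A secondary point is to confirm that $\partial$, and its composition with the $\mu_N$-norm, is a well-defined homomorphism of the relevant pro-nilpotent Lie algebras that preserves group-likeness, so that the whole computation stays among group-like elements and the output is genuinely a relation in $\exp\frak t_3^0$, as \eqref{distribution} requires.
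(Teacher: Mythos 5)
Your instinct to specialize the mixed pentagon equation and let some factors collapse is indeed the mechanism of the paper's proof, and your observation that no pentagon or hexagon hypothesis on $g$ is needed is correct. But your plan has a structural gap: you apply a \emph{single} composite map $\partial$ (a degeneration followed by a $\mu_N$-norm) to \eqref{mixed pentagon-GRTM} and expect the surviving factors to ``reassemble into the two sides of \eqref{distribution}, relating the averaged image of $h$ to $g$.'' That cannot be the distribution relation: in its group-like form \eqref{distribution-GRTMD}, the relation for $N'=1$ reads $\pi_{N1}(h)=e^{\rho_{N1}(h)B}\,\delta_{N1}(h)$, an identity comparing \emph{two different} specializations of the same $h$, in which $g$ does not appear at all. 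A single homomorphism applied to the five-term identity assigns to each factor one composite specialization, and the two required behaviours on the cyclotomic labels are incompatible on shared generators: $\pi_{N1}$ must identify all $t(a)^{ij}$ regardless of $a$ (this is your norm/push-forward), while $\delta_{N1}$ must kill $t(a)^{ij}$ for $a\neq 0$ and keep the label-zero part; for instance the factors $h^{12,3,4}$ and $h^{1,23,4}$ both involve the generators $t(a)^{34}$, so no one map can act as $\pi$ on one factor and as $\delta$ on the other. What your single specialization can yield is at best one identity of the form $\pi_{N1}(h)=e^{cB}g$ --- half of what is needed.

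The paper's proof runs the degeneration \emph{twice}: composing \eqref{mixed pentagon-GRTM} with $\pi_{N1}$ followed by elimination of the first strand gives $\pi_{N1}(h)=e^{c_B(\pi_{N1}(h))B}g$, while composing with $\delta_{N1}$ followed by the same elimination gives $\delta_{N1}(h)=e^{c_{B(0)}(h)B}g$; eliminating $g$ between the two (the exponential factors commute, both being powers of $e^{B}$) yields exactly \eqref{distribution-GRTMD} with $\rho_{N1}(h)=c_B(\pi_{N1}(h))-c_{B(0)}(h)$. A small but necessary ingredient your sketch also omits is Lemma \ref{c=0}, namely $c_A(h)=0$, read off from the degree-one part of the mixed pentagon equation; without it a spurious factor $e^{\ast A}$ would survive in both identities, and your worry about ``no spurious term'' is resolved precisely there, not by bookkeeping of insertion maps alone. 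So the repair is: keep your degeneration idea, but run it with both the label-collapsing map $\pi_{N1}$ and the label-zero section $\delta_{N1}$, then combine the two resulting expressions for $g$.
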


As a consequence, we obtain the equality of two groups
$$\mathrm{GRTMD}_{(\bar 1,1)}(N,\bold k)=\mathrm{GRTM}_{(\bar 1,1)}(N,\bold k)$$
and of two torsors
$$\mathrm{Psdist}_{(\bar 1,1)}(N,\bold k)=\mathrm{Pseudo}_{(\bar 1,1)}(N,\bold k)$$
for a prime $N$ (see Corollary \ref{grtmd=grtm}).

(B). The Lie algebras $\frak{grtmd}(N, \bf k)$ are defined by mixed pentagon \eqref{mixed pentagon},
octagon \eqref{octagon}, speciality \eqref{special-b} and distribution \eqref{distribution} equations. 
It was proved in \cite{E} that the speciality condition implies the octagon one. We prove that for $N=2$ the mixed pentagon equation implies the octagon equation:


\begin{thm}[{\bf Theorem \ref{pentagon and octagon in Lie case}}]
For $N=2$ if a pair of two Lie elements
$(\varphi,\psi)\in \frak t^0_{3}\times\frak t^0_{3,2}$
with $c_{B(0)}(\psi)=c_{AB(0)}(\psi)=0$
(for the notations see below)
satisfies the mixed pentagon equation \eqref{mixed pentagon},
then it also satisfies the octagon equation \eqref{octagon}.
\end{thm}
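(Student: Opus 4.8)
The plan is to imitate, at the infinitesimal level, the mechanism by which the pentagon relation forces the hexagon relations (Theorem \ref{Furusho paper}), and to supply the genuinely cyclotomic input coming from the exceptional $z\mapsto -z$ symmetry of $\bold P^1\setminus\{0,\pm1,\infty\}$ that underlies Broadhurst duality. Concretely, I would realize the octagon \eqref{octagon} as an identity in the reduced cyclotomic braid Lie algebra $\frak t^0_{3,2}$ and produce it by degenerating the mixed pentagon \eqref{mixed pentagon}, which lives in the four-point algebra $\frak t^0_{4,2}$, along its boundary strata and then recombining the pieces by means of the $N=2$ involution.

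First I would write out \eqref{mixed pentagon} explicitly as a linear combination of insertions of $\varphi\in\frak t^0_3$ and of $\psi\in\frak t^0_{3,2}$ into $\frak t^0_{4,2}$ via the coproduct and cabling maps, arranged so that the five terms correspond to the five codimension-one faces of the relevant compactified moduli space. I would then apply the several face (degeneration) morphisms $\partial\colon\frak t^0_{4,2}\to\frak t^0_{3,2}$ and $\frak t^0_{4,2}\to\frak t^0_3$ to this equation. The purely non-cyclotomic faces reproduce the pentagon for $\varphi$ alone, so by Theorem \ref{Furusho paper} the hexagon content for $\varphi$ is already available and may be used freely; the remaining, cyclotomically decorated faces yield a family of relations in $\frak t^0_{3,2}$ expressing $\psi$ evaluated at the various degenerate configurations.

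Next I would introduce the involution $\sigma$ of $\frak t^0_{3,2}$ induced by the symmetry of $\{0,\pm1,\infty\}$ that exchanges the two nontrivial roots of unity, i.e. swaps $B(0)\leftrightarrow B(1)$ up to sign, and combine the degenerated pentagons with their $\sigma$-images. The eight factors of \eqref{octagon} should emerge exactly from this recombination: four from the degenerations and four from their reflections. The identity will hold up to correction terms of low weight, namely a term linear in $B(0)$ and a term yielding $AB(0)$, which the degeneration procedure does not automatically annihilate.

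The main obstacle is precisely the control of these low-weight corrections and the verification that the $\sigma$-recombination closes up into the octagon rather than into some weaker relation. This is where the hypotheses $c_{B(0)}(\psi)=0$ and $c_{AB(0)}(\psi)=0$ enter: I expect them to be exactly the obstructions to the vanishing of the residual linear and quadratic terms, so that once they are imposed the combination collapses to \eqref{octagon}. Keeping track of the signs introduced by $\sigma$ and of the normalization of each insertion map, rather than any single hard computation, is the delicate point, and the two coefficient conditions are the minimal extra data needed to make the bookkeeping consistent.
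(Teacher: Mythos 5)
Your overall geometric strategy is indeed the one the paper follows -- degenerate the mixed pentagon \eqref{mixed pentagon} over a Kummer covering of $\frak M_{0,5}$, take a signed sum over faces, and close it up by the exceptional $N=2$ symmetry, in the spirit of the pentagon--hexagon elimination of Theorem \ref{Furusho paper} -- but two essential mechanisms are missing and one key attribution is wrong. First, the symmetry that makes the recombination produce the octagon is not an involution of $\frak t^0_{3,2}$ swapping $B(0)\leftrightarrow B(1)$: it is an \emph{order-four} automorphism $s$ of the four-point algebra $\frak t^0_{4,2}$ (with $s(t^{12})=t^{13}$, $s(t^{13})=t^{12}$, $s(t^{14})=t^{14}$, $s(t^{23}_\pm)=t^{23}_\mp$, $s(t^{24}_\pm)=t^{34}_\mp$, $s(t^{34}_\pm)=t^{24}_\pm$), applied as $(id+s+s^2+s^3)$ to the alternating sum $\sum_{\sigma\in\frak S_3}\epsilon(\sigma)\varPi^{1,\sigma(2),\sigma(3),\sigma(4)}$ of the five-term mixed-pentagon defects; it is exactly the telescoping of the $(s-id)$-difference $\psi^{12,3,4}-\psi^{12,4,3}+\psi^{1,4,3}-\psi^{1,3,4}$ under this averaging that isolates the two octagons (geometrically, those near $(0,0)$ and $(\infty,\infty)$ in $\tilde{\frak M}^2_{0,5}$, assembled from all twenty-four pentagons). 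A plain three-point involution applied to individual degenerated pentagons does not make these cross terms cancel, so your ``four degenerations plus four reflections'' recombination does not close up as stated.

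The more serious gap concerns the hypotheses $c_{B(0)}(\psi)=c_{AB(0)}(\psi)=0$: they do \emph{not} serve to annihilate residual linear and quadratic corrections in the recombination. In the actual argument they are consumed at the outset: applying $\delta_{21}$ to \eqref{mixed pentagon} and eliminating the first strand gives $\delta_{21}(\psi)=\varphi$ and the ordinary pentagon \eqref{pentagon} for $\varphi$, so the hypotheses become $c_B(\varphi)=c_{AB}(\varphi)=0$ and Theorem \ref{Furusho paper}(1) yields duality and hexagons \eqref{two hexagons} for $\varphi$ -- which is needed precisely to cancel the hexagon term $\sum_{\sigma\in\frak S_3}\epsilon(\sigma)\varphi^{\sigma(2),\sigma(3),\sigma(4)}$ in the averaged identity. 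Moreover, the residual ambiguity left after recombination is not low-weight at all: one arrives only at $\varOmega^{1,2,3}=\varOmega^{14,2,3}$, where $\varOmega$ is the octagon defect, and the decisive input is a structural lemma -- any $X\in\frak t^0_{3,2}$ with $X^{1,2,3}=X^{14,2,3}$ in $\frak t^0_{4,2}$ lies in the free rank-$2$ subalgebra generated by $t^{23}_+,t^{23}_-$ (proved via the freeness of the subalgebra of $\frak t^0_{4,2}$ on $t^{12},t^{23}_\pm,t^{24}_\pm$). Hence $\varOmega=r(t^{23}_+,t^{23}_-)$ with $r$ an arbitrary Lie series, not a linear-plus-$AB(0)$ correction, and no coefficient hypothesis can remove it; instead the paper kills $r$ unconditionally by evaluating in the two quotients $\frak t^0_{3,2}/(t^{12})\simeq\frak F_2\simeq\frak t^0_{3,2}/(t^{13})$, whose two resulting expressions for $r$ are negatives of each other, forcing $r=0$. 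Without this lemma and the two-quotient evaluation your plan only yields the octagon modulo an uncontrolled element of $\frak F_2$.
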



While this result may be viewed as not particularly useful in view of
the result of \cite{E}, its proof might be of interest as it extends the
combinatorial arguments of \cite{F2} to a Kummer covering 
$\tilde{{\frak M}}_{0,5}^2$
of the moduli space $\frak M_{0,5}$.

The pro-algebraic groups $GRTMD(N,\bf k)$ are similarly defined by the
mixed pentagon \eqref{mixed pentagon-GRTM}, octagon \eqref{octagon-GRTM} and speciality equations \eqref{special-b-GRTM}. 
We prove that for $N=2$ 
the octagon equation is implied by the other two equations
in the setting:

\begin{thm}[{\bf Theorem \ref{pentagon and octagon}}]
For $N=2$ if a pair of two group-like element
$(g,h)\in\exp\frak t^0_3\times\exp\frak t^0_{3,2}$ with $c_{B(0)}(h)=c_{AB(0)}(h)=0$
satisfies the mixed pentagon equation \eqref{mixed pentagon-GRTM}
and the special action condition 
\eqref{special-b-GRTM}, 
then it also satisfies the octagon equation \eqref{octagon-GRTM}.
\end{thm}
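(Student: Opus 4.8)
The plan is to transpose the combinatorial argument behind the Lie-algebra analogue, Theorem \ref{pentagon and octagon in Lie case}, from the infinitesimal setting to the prounipotent group of group-like elements. The geometry is the same in both cases: the octagon relation \eqref{octagon-GRTM} is the monodromy identity attached to a distinguished closed loop on the Kummer covering $\tilde{{\frak M}}_{0,5}^2$ of $\frak M_{0,5}$, whereas the mixed pentagon equation \eqref{mixed pentagon-GRTM} is attached to the pentagonal $2$-cells of a cell decomposition of a real blow-up of this covering. Because group-like elements of $\exp\frak t^0_3$ and $\exp\frak t^0_{3,2}$ are exactly the objects produced by the holonomy of the (cyclotomic) KZ connection, the defining relations are genuine multiplicative identities in the group, and I would run the whole argument there, with group multiplication and the pullbacks along the natural morphisms relating the boundary strata of $\tilde{{\frak M}}_{0,5}^2$ playing the role of the infinitesimal operations in \cite{F2}.

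First I would fix tangential base points on the relevant strata together with the connecting (``straightening'') paths, so that the eight factors of \eqref{octagon-GRTM} are realized as the holonomies of $(g,h)$ along the consecutive edges of the octagon loop. Next I would exhibit a triangulated $2$-chain bounding this loop whose interior cells are precisely the pentagonal faces carrying instances of \eqref{mixed pentagon-GRTM}, together with a controlled number of exceptional cells coming from the ramification of the degree-two Kummer cover, i.e. from its $\bold Z/2$-deck symmetry. Composing the five-term pentagon relations around the boundary of this $2$-chain should telescope, expressing the octagon product as a product of the exceptional contributions alone.

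The remaining task is to show that these exceptional contributions reduce to the explicit monomial factors appearing in \eqref{octagon-GRTM}, and this is exactly the point at which the group case genuinely departs from the Lie case. The normalization $c_{B(0)}(h)=c_{AB(0)}(h)=0$ pins down the group-like data attached to the degenerate strata, while the special action condition \eqref{special-b-GRTM} controls the group-like element attached to the edges fixed by the involution. At the infinitesimal level these quantities are forced to vanish by linearity once the coefficient conditions are imposed, which is why Theorem \ref{pentagon and octagon in Lie case} needs no speciality hypothesis; at the group level the corresponding obstruction is nonlinear and must be killed by hand using \eqref{special-b-GRTM}, in the spirit of the implication ``speciality $\Rightarrow$ octagon'' of \cite{E}.

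The hard part will be the non-commutativity. In the Lie-algebra proof one may work degree by degree and treat the pentagon as an additive cocycle condition, so the order in which the faces are traversed is immaterial; here the octagon is a truly multiplicative identity and the telescoping of the pentagon relations must respect the precise ordering of the faces along the boundary of the $2$-chain. I would therefore have to verify that the chosen base points and straightening paths are mutually compatible, so that the composed pentagon relations leave no residual conjugation factors, that group-likeness is preserved at every step, and that the only surviving terms are exactly those governed by \eqref{special-b-GRTM} and by $c_{B(0)}(h)=c_{AB(0)}(h)=0$. Checking that no spurious correction term survives this composition is the crux of the argument.
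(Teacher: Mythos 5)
Your plan diverges from the paper's proof, and the step you yourself flag as the crux --- that the multiplicative telescoping of pentagon faces around a $2$-chain in $\tilde{\frak M}^2_{0,5}$ leaves no residual conjugation factors --- is a genuine gap, not a verification left to the reader. The Lie-algebra proof of Theorem \ref{pentagon and octagon in Lie case} depends on commutativity of addition at two essential points: the alternating sum $\sum_{\sigma\in\frak S_3}\epsilon(\sigma)\varPi^{1,\sigma(2),\sigma(3),\sigma(4)}$, in which the pentagon instances cancel regardless of the order of traversal, and the subsequent averaging by $(id+s+s^2+s^3)$ combined with $s^2$-invariance. Neither operation has a known multiplicative analogue: a product of conjugated instances of \eqref{mixed pentagon-GRTM} around the twenty-four pentagons yields the octagon only up to an ordering constraint and up to conjugators, and you propose no mechanism for eliminating these. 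Indeed, the remark following Theorem \ref{pentagon and octagon} states that the analogue of Theorem \ref{Furusho paper} (3) --- deducing \eqref{octagon-Pseudo} from the mixed pentagon equation alone --- is not known, which indicates that the direct geometric route at group level is not merely laborious but open.

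The paper avoids this difficulty entirely by a bootstrap to the Lie case. First, applying $\delta_{21}$ to \eqref{mixed pentagon-GRTM} and using Lemma \ref{c=0} gives $\delta_{21}(h)=g$ and the pentagon \eqref{pentagon-GRT} for $g$; the hypotheses $c_{B(0)}(h)=c_{AB(0)}(h)=0$ force the degree $1$ and $2$ terms of $g$ to vanish, so Theorem \ref{Furusho paper} (2) yields $g\in\mathrm{GRT}_1(\bold k)$ --- this, not a normalization of data on degenerate strata, is the actual role of your coefficient conditions. Then one inducts on degree: assuming \eqref{octagon-GRTM} modulo degree $n$, Lemma \ref{surjection} provides $(g_1,h_1)\in\mathrm{GRTM}_{(\bar 1,1)}(2,\bold k)$ congruent to $(g,h)$ modulo degree $n$; writing $(g,h)=(g_0,h_0)\circ(g_1,h_1)$ via the group law (Proposition \ref{GRTM forms a group}, itself the appendix result on infinitesimal module categories), the degree-$n$ part $(\varphi,\psi)$ of $(g_0,h_0)$ satisfies the Lie-level equations \eqref{mixed pentagon} and \eqref{special-b}, hence lies in $\frak{grtm}_{(\bar 1,1)}(2,\bold k)$ by Theorem \ref{pentagon and octagon in Lie case}, and exponentiating it improves the approximation to degree $n+1$. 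The speciality hypothesis \eqref{special-b-GRTM} is needed precisely to keep this induction inside $\mathrm{GRTM}$ and $\frak{grtm}$, not to ``kill exceptional cells by hand'' as you suggest. To salvage your approach you would have to produce the noncommutative telescoping identity explicitly, or else linearize degree by degree --- at which point you would have reproduced the paper's argument.
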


(C).
In \cite{B},
Broadhurst introduced a family of  \lq duality' relations among multiple $L$-values
for $N=2$.
These relations will be shown to be compatible with the torsor
structure of $\mathrm{Psdist}_{(\bar 1,1)}(2,\bold k)$:

\begin{thm}[{\bf Theorem \ref{Broadhurst torsor}}]
The subset $\mathrm{PseudoB}_{(\bar 1,1)}(2,\bold k)$
defined by the  Broadhurst duality \eqref{duality-GRTM}
forms a subtorsor of $\mathrm{Psdist}_{(\bar 1,1)}(2,\bold k)$.
\end{thm}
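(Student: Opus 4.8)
The plan is to realize the Broadhurst duality as invariance under a single involutive automorphism $\tau$ of the ambient (pro)group-algebra on the generators $A,B$, acting compatibly with both the group $\mathrm{GRTMD}_{(\bar 1,1)}(2,\bold k)$ and the torsor $\mathrm{Psdist}_{(\bar 1,1)}(2,\bold k)$, and then to run the standard fixed-point argument for subtorsors. For $N=2$ the exceptional symmetry of the punctured line $\bold P^1\backslash\{0,\pm 1,\infty\}$ induces such a $\tau$, and the duality \eqref{duality-GRTM} is precisely the condition $\tau(x)=x$ (equivalently the relation between $x$ and $\tau(x)$ recorded there). I would first write down $\tau$ explicitly and check that it is an involution.

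Next I would verify that $\tau$ preserves each of the two ambient objects, i.e. $\tau\bigl(\mathrm{GRTMD}_{(\bar 1,1)}(2,\bold k)\bigr)=\mathrm{GRTMD}_{(\bar 1,1)}(2,\bold k)$ and $\tau\bigl(\mathrm{Psdist}_{(\bar 1,1)}(2,\bold k)\bigr)=\mathrm{Psdist}_{(\bar 1,1)}(2,\bold k)$. This reduces to checking that $\tau$ carries the mixed pentagon \eqref{mixed pentagon-GRTM}, octagon \eqref{octagon-GRTM}, speciality \eqref{special-b-GRTM} and distribution equations into equivalent forms of themselves; since these equations arise from monodromy/boundary-stratum relations, $\tau$ should permute the relevant strata and hence preserve the solution sets. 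With this in hand, $\mathrm{PseudoB}_{(\bar 1,1)}(2,\bold k)=\bigl(\mathrm{Psdist}_{(\bar 1,1)}(2,\bold k)\bigr)^{\tau}$ and the fixed subgroup $\bigl(\mathrm{GRTMD}_{(\bar 1,1)}(2,\bold k)\bigr)^{\tau}$ are well defined.

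The crux is the compatibility of $\tau$ with the torsor structure: I would show that $\tau$ restricts to a group automorphism of $\mathrm{GRTMD}_{(\bar 1,1)}(2,\bold k)$ and that it intertwines the action, $\tau(g\cdot x)=\tau(g)\cdot\tau(x)$ for all $g\in\mathrm{GRTMD}_{(\bar 1,1)}(2,\bold k)$ and $x\in\mathrm{Psdist}_{(\bar 1,1)}(2,\bold k)$. This is where the explicit composition law of the action on pairs $(g,h)$ enters, and it is the main obstacle: one must unwind the substitutions defining the action and verify that applying $\tau$ to the substituted arguments agrees with substituting the $\tau$-transformed data, while keeping track of the $c_{B(0)}$- and $c_{AB(0)}$-type normalizations and of the two factors $\exp\frak t^0_3$ and $\exp\frak t^0_{3,2}$. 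I expect this to follow once $\tau$ is identified with the automorphism induced by the geometric symmetry, since the action is itself defined by the same geometric (stratum-gluing) recipe and is therefore $\tau$-equivariant, but the bookkeeping must be done with care.

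Granting the compatibility, the subtorsor property is formal. The fixed set $H:=\bigl(\mathrm{GRTMD}_{(\bar 1,1)}(2,\bold k)\bigr)^{\tau}$ is a subgroup because $\tau$ is a group homomorphism fixing the identity. Stability $H\cdot\mathrm{PseudoB}_{(\bar 1,1)}(2,\bold k)\subseteq\mathrm{PseudoB}_{(\bar 1,1)}(2,\bold k)$ follows from $\tau(g\cdot x)=\tau(g)\cdot\tau(x)=g\cdot x$ when $\tau(g)=g$ and $\tau(x)=x$. For simple transitivity, given $x,y\in\mathrm{PseudoB}_{(\bar 1,1)}(2,\bold k)$ let $g$ be the unique element with $g\cdot x=y$; applying $\tau$ yields $\tau(g)\cdot x=\tau(y)=y=g\cdot x$, so $\tau(g)=g$ by uniqueness, i.e. $g\in H$. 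Finally I would record that $\mathrm{PseudoB}_{(\bar 1,1)}(2,\bold k)$ is nonempty, the cyclotomic KZ element furnishing a canonical point by Broadhurst's computation, so that it is a torsor under $H$ and hence a subtorsor of $\mathrm{Psdist}_{(\bar 1,1)}(2,\bold k)$.
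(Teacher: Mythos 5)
Your formal fixed-point skeleton (fixed subgroup, stability, and the uniqueness trick for simple transitivity) is sound \emph{as formalism}, but the proof rests on two claims you do not establish, and neither is routine. First, the duality \eqref{duality-GRTM} is not of the form $\tau(x)=x$: it reads $\tau(h)e^{\alpha_h B(0)}he^{\alpha_h A}=1$, relating $\tau(h)$ to $h^{-1}$ up to exponential correction factors, with $\alpha_h=c_{B(1)}(h)$ generally nonzero (for $\varPhi^2_{KZ}$ one has $\alpha=\log 2$). So to run your argument you must first define a twisted involution, say $\tau^*(h)=e^{-\alpha(h)B(0)}\tau(h)^{-1}e^{-\alpha(h)A}$, on \emph{all} of $\mathrm{Psdist}_{(\bar 1,1)}(2,\bold k)$ and $\mathrm{GRTMD}_{(\bar 1,1)}(2,\bold k)$, i.e.\ prove that the duality-twist of a solution of the mixed pentagon \eqref{mixed pentagon-GRTM}, octagon \eqref{octagon-Pseudo}/\eqref{octagon-GRTM} and speciality \eqref{special-b-GRTM} equations is again a solution. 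But $\tau$ is given only as an automorphism of $\frak t^0_{3,2}$ (the four-punctured level); it comes with no extension to $\frak t^0_{4,2}$ compatible with the simplicial maps entering the mixed pentagon, so your assertion that ``$\tau$ permutes the relevant strata and hence preserves the solution sets'' is exactly the missing content, not an observation --- the paper itself never claims, and does not need, $\tau$-stability of the ambient group or torsor. The same applies to the intertwining $\tau(g\cdot x)=\tau(g)\cdot\tau(x)$, which your transitivity step requires for \emph{all} $g\in\mathrm{GRTMD}_{(\bar 1,1)}(2,\bold k)$ and $x\in\mathrm{Psdist}_{(\bar 1,1)}(2,\bold k)$, not merely on the fixed loci.

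The paper circumvents both difficulties. For the group, it passes to outer automorphisms: $\tau$ induces involutions on $\mathrm{Out}(\frak t^0_{3,2})$ and $\mathrm{OutDer}(\frak t^0_{3,2})$, and \eqref{duality-GRTM} (resp.\ \eqref{duality}) is identified with membership of $(A_g,\bar A_h)$ (resp.\ $(D_\varphi,\bar D_\psi)$) in $\mathrm{Aut}(\frak t^0_3)\times\mathrm{Out}^+(\frak t^0_{3,2})$ --- the inner ambiguity absorbing precisely the factors $e^{\alpha_h B(0)}$, $e^{\alpha_h A}$ --- so that $\mathrm{GRTMB}_{(\bar 1,1)}(2,\bold k)$ is an \emph{intersection of two subgroups} of an ambient group, with no stability claim needed. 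For the torsor, it proves only the narrower statements your route would subsume: a direct computation that if $(g_3,h_3)=(g_1,h_1)\circ(g_2,h_2)$ with both factors satisfying duality, then $h_3$ satisfies it with $\alpha_{h_3}=\alpha_{h_1}+\alpha_{h_2}$ (stability of the action), and then transitivity by induction on degree: if two points of $\mathrm{PseudoB}_{(\bar 1,1)}(2,\bold k)$ agree modulo $\deg n-1$, the degree-$n$ part $\psi$ of their difference satisfies $\tau(\psi)+\psi=0$, hence gives an element of $\frak{grtmb}_{(\bar 1,1)}(2,\bold k)$, whose exponential corrects the discrepancy; iterating shows the unique connecting element of $\mathrm{GRTM}_{(\bar 1,1)}(2,\bold k)$ lies in $\mathrm{GRTMB}_{(\bar 1,1)}(2,\bold k)$. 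To salvage your proposal you would have to prove the stability and equivariance claims above, which is harder than the theorem itself; the practical fix is to replace them by the outer-automorphism intersection together with the degree-by-degree approximation argument. (Your nonemptiness remark via $\varPhi^2_{KZ}$ also only produces a point over $\bold C$, not over a general $\bold k$.)
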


(D).
The notion of infinitesimal module categories over
braided monoidal categories is introduced in our appendix.
It is defined  by several axioms including the mixed pentagon axiom.
In Proposition \ref{GRTM forms a group} the notion is employed to prove 
that the set $\mathrm{GRTM}_{(\bar 1,1)}(N,\bf k)$ forms a group by 
the multiplication \eqref{multiplication-GRTM}.

The structure of the paper is the following.
\S\ref{GRT} and \S\ref{cyclotomic GRT} are a review of the
Grothendieck-Teichm\"{u}ller theory in \cite{Dr} and \cite{E}.
In \S\ref{cyclotomic GRT},
elimination result (A) is proved (Proposition \ref{mixed pentagon and distribution}).
Results (B) on mixed
pentagon and octagon relations are proved 
in \S\ref{Mixed pentagon and octagon equation}
(Theorems \ref{pentagon and octagon in Lie case} and \ref{pentagon and octagon}).
Result (C) on compatibility of the Broadhurst duality relations with a torsor
structure is proved in \S\ref{Broadhurst duality} (Theorem \ref{Broadhurst torsor}). Appendix \ref{Infinitesimal module categories} contains result
(D), i.e., the basics of infinitesimal module category and the proof of the
fact implicitly used in \cite{E} that $\mathrm{GRTM}_{(\bar 1,1)}(N,\bf k)$ is a group.
Some errors in  \cite{E} are corrected in
Appendix \ref{Erratum}.

\section{The Grothendieck-Teichm\"{u}ller group}\label{GRT}
This section is a short review on Drinfeld's theory of
associators in \cite{Dr}.

Let $\bold k$ be a field of characteristic 0.
For $n\geqslant 2$, the Lie algebra $\frak t_n$
of infinitesimal pure braids is the completed $\bold k$-Lie algebra
with generators $t^{ij}$ ($i\neq j$, $1\leqslant i,j\leqslant n$)
and relations
$$t^{ij}=t^{ji}, \ [t^{ij},t^{ik}+t^{jk}]=0 \text{ and }
[t^{ij},t^{kl}]=0 \quad  \text{for all distinct $i$, $j$, $k$, $l$.}$$
We note that $\frak t_2$ is the 1-dimensional abelian Lie algebra
generated by $t^{12}$. 
The element $z_{n}=\sum_{1\leqslant i<j\leqslant n}t^{ij}$
is central in $\frak t_n$.
Put $\frak t^0_{n}$ to be the Lie subalgebra of $\frak t_{n}$
with the same generators except $t^{1n}$ and the same relations as $\frak t_{n}$.
Then we have
$$\frak t_{n}=\frak t^0_{n}\oplus\bold k\cdot z_{n}.$$
When $n=3$, $\frak t^0_{3}$ is the free Lie algebra $\frak F_{2}$
of rank $2$ with generators $A:=t^{12}$ and $B:=t^{23}$.

If $S$ and $T$ are two sets, then a
{\it partially defined map} $f:S\to T$ means the data of (a) a subset $D_f \subset S$, and (b) a map $f:D_f\to T$.
For a partially defined map
$f:\{1,\dots,m\}\to\{1,\dots,n\}$,
the Lie algebra morphism 
$\frak t_{n}\to\frak t_{m}$,
$x\mapsto x^f=x^{f^{-1}(1),\dots,f^{-1}(n)}$
is uniquely defined by
$$(t^{ij})^f=\sum_{i'\in f^{-1}(i),j'\in f^{-1}(j)}t^{i'j'}.$$

\begin{defn}[\cite{Dr}]
The Grothendieck-Teichm\"{u}ller
Lie algebra $\frak{grt}_1(\bold k)$ is defined to be the
set of $\varphi=\varphi(A,B)\in \frak t^0_{3}$
satisfying
the {\it duality and hexagon equations} in $\frak t^0_3$
\begin{equation}\label{two hexagons}
\varphi(A,B)+\varphi(B,A)=0, \qquad
\varphi(A,B)+\varphi(B,C)+\varphi(C,A)=0
\end{equation}
with $A+B+C=0$,
the {\it special derivation condition} in $\frak t^0_{3}$
\begin{equation}\label{special}
[B,\varphi(A,B)]+[C,\varphi(A,C)]=0,
\end{equation}
and the {\it pentagon equation} in $\frak t^0_4$
\begin{equation}\label{pentagon}
\varphi^{1,2,34}+\varphi^{12,3,4}=
\varphi^{2,3,4}+\varphi^{1,23,4}+\varphi^{1,2,3}.
\end{equation}
\end{defn}
It actually forms a Lie algebra with the Lie bracket given by
\begin{equation}\label{Lie bracket}
\langle\varphi_1,\varphi_2\rangle=[\varphi_1,\varphi_2]
+D_{\varphi_2}(\varphi_1)-D_{\varphi_1}(\varphi_2),
\end{equation}
where $D_\varphi$ is the derivation of
$\frak t^0_{3}$ defined by
$$D_\varphi(A)=[\varphi,A]\quad \text{and } D_\varphi(B)=0.$$
The Lie algebra structure is realised by the embedding
$$\frak{grt}_1(\bold k)\hookrightarrow \mathrm{Der}(\frak t^0_{3})$$
sending $\varphi\mapsto D_\varphi$.

\begin{defn}[\cite{Dr}]
The Grothendieck-Teichm\"{u}ller group $\mathrm{GRT}_1(\bold k)$ is defined to be the
set of series $g\in \exp\frak t^0_{3}$ satisfying
the {\it duality and hexagon equations} in $\exp\frak t^0_3$
\begin{equation}\label{hexagons-GRT}
g(A,B)g(B,A)=1, \qquad
g(C,A)g(B,C)g(A,B)=1
\end{equation}
with $A+B+C=0$,
the {\it special action condition} in $\exp\frak t^0_{3}$
\begin{equation}\label{special-GRT}
A+g(A,B)^{-1}Bg(A,B)+g(A,C)^{-1}Cg(A,C)=0,
\end{equation}
and the {\it pentagon equation} in $\exp\frak t^0_4$
\begin{equation}\label{pentagon-GRT}
g^{1,2,34}g^{12,3,4}=
g^{2,3,4}g^{1,23,4}g^{1,2,3}.
\end{equation}
\end{defn}

It forms a group by the multiplication
\begin{equation}\label{multiplication-GRT}
g_1\circ g_2=g_2(A,B)\cdot g_1(A,g_2^{-1}Bg_2).
\end{equation}
The group structure is realised by the embedding (but anti-homomorphism)
$$\mathrm{GRT}_1(\bold k)\hookrightarrow \mathrm{Aut}\frak t^0_3$$
sending $g$ to the automorphism $A_g$
defined by 
$$A\mapsto A \ \text{ and } B\mapsto g^{-1}Bg.$$
We note that its associated Lie algebra is $\frak{grt}_1(\bold k)$.

\begin{defn}[\cite{Dr}]
The associator set $\mathrm{M}_1(\bold k)$ is defined to be the
set of series $g\in \exp\frak t^0_{3}$ satisfying
the pentagon equation \eqref{pentagon-GRT} and
the following variant of hexagon equations
\begin{equation}\label{hexagons-M}
g(A,B)g(B,A)=1,\quad
\exp\{\frac{A}{2}\}g(C,A)\exp\{\frac{C}{2}\}g(B,C)\exp\{\frac{B}{2}\}g(A,B)=1
\end{equation}
with $A+B+C=0$.
\end{defn}

It forms a right $\mathrm{GRT}_1(\bold k)$-torsor by \eqref{multiplication-GRT}
with $g_1\in \mathrm{M}_1(\bold k)$ and $g_2\in \mathrm{GRT}_1(\bold k)$.

\begin{rem}\label{GRT_1=M_0}
It is shown that the special derivation condition
\eqref{special} for $\varphi\in\frak t^0_3$
(resp.\eqref{special-GRT} for $g\in\exp \frak t^0_3$)
follows from duality and hexagon equations \eqref{two hexagons}
and the pentagon equation \eqref{pentagon} in \cite{Dr} proposition 5.7.
(resp.\eqref{hexagons-GRT} and \eqref{pentagon-GRT}
in \cite{Dr} proposition 5.9.)
\end{rem}

\begin{rem}
A typical example of elements in $\mathrm{M}_1(\bold C)$
is 
$$\varphi_{KZ}(A,B)=\varPhi_{KZ}
\Bigl(\frac{A}{2\pi\sqrt{-1}},\frac{B}{2\pi\sqrt{-1}}\Bigr)$$
constructed in \cite{Dr},
where $\varPhi_{KZ}(A,B)$ is the {\it Drinfeld associator}.
This series has the following expression:
$$
\varPhi_{KZ}(A,B)=1+
\sum
(-1)^m\zeta(k_1,\cdots,k_m)A^{k_m-1}B\cdots A^{k_1-1}B+
\text{(regularized terms)}
$$
where 
$\zeta(k_1,\cdots,k_m)$ are {\it multiple zeta values} defined by
the following series
\begin{equation}\label{multiple zeta value}
\zeta(k_1,\cdots,k_m)
=\sum_{0<n_1<\cdots<n_m}\frac{1}
{n_1^{k_1}\cdots n_m^{k_m}}
\end{equation}
for $m$, $k_1,\dots, k_m\in {\bf N} (={\bf Z}_{>0})$ with $k_m\neq 1$
and for the regularised terms see \cite{F03}.
\end{rem}

For a monic monomial $W$ in
$U\frak t^0_3={\bf k}\langle\langle A,B\rangle\rangle$,
$c_W(g)$ for $g\in U\frak t^0_3$
means the coefficient of $W$ in $g$.
On pentagon and  hexagon equations we have the following.

\begin{thm}[\cite{F2}]\label{Furusho paper}
(1).
Let $\varphi$ be an element of $\frak t^0_3$
with $c_B(\varphi)=c_{AB}(\varphi)=0$.
If $\varphi$ satisfies the pentagon equation \eqref{pentagon},
then it also satisfies duality and hexagon equations \eqref{two hexagons}.

(2).
Let $g$ be an element of $\exp\frak t^0_3$ with $c_B(g)=c_{AB}(g)=0$.
If $g$ satisfies the pentagon equation \eqref{pentagon-GRT},
then it also satisfies duality and hexagon equations \eqref{hexagons-GRT}.

(3).
Let $g$ be an element of $\exp\frak t^0_3$ with $c_B(g)=0$ and $c_{AB}(g)\in\bold k^\times$.
If $g$ satisfies the pentagon equation \eqref{pentagon-GRT},
then the duality and hexagon equations \eqref{hexagons-M}
hold for $g(\frac{A}{\mu},\frac{B}{\mu})$ with $\mu=\pm\sqrt{24c_{AB}(g)}\in\bar{\bold k}$.
\end{thm}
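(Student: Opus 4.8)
The plan is to normalize the quadratic coefficient by a scaling and then to extract the two relations of \eqref{hexagons-M} from the pentagon equation \eqref{pentagon-GRT} through the same analysis of the compactified moduli space $\overline{\frak M}_{0,5}$ that underlies parts (1) and (2). The first step uses the homogeneity of \eqref{pentagon-GRT}: for $\mu\in\bar{\bold k}^\times$ the assignment $t^{ij}\mapsto\mu^{-1}t^{ij}$ defines a Lie algebra automorphism $s_\mu$ of each $\frak t_n$, and on generators one checks that it commutes with every cabling homomorphism $x\mapsto x^f$; hence $s_\mu$ carries a solution of the pentagon equation to a solution. Applying $s_\mu$ to \eqref{pentagon-GRT} for $g$ shows that $g_\mu(A,B):=g(A/\mu,B/\mu)$ again satisfies \eqref{pentagon-GRT}, while comparison of coefficients gives $c_B(g_\mu)=c_B(g)/\mu=0$ and $c_{AB}(g_\mu)=c_{AB}(g)/\mu^2$. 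With $\mu=\pm\sqrt{24\,c_{AB}(g)}$ this becomes $c_{AB}(g_\mu)=1/24$. It therefore suffices to prove that every group-like solution of \eqref{pentagon-GRT} with $c_B=0$ and $c_{AB}=1/24$ satisfies \eqref{hexagons-M}, and to apply this to $g_\mu$.

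For such a normalized $g$ I would treat the two relations of \eqref{hexagons-M} separately. The duality relation $g(A,B)g(B,A)=1$ coincides with the first equation of \eqref{hexagons-GRT}, and I expect its derivation from the pentagon equation to require only the normalization $c_B(g)=0$ shared by (2) and (3), so that the argument proving it in part (2) applies unchanged; it reflects the $\bold Z/2$ part of the dihedral symmetry of the three distinguished points $0,1,\infty$, realized as an involution of $\overline{\frak M}_{0,5}$.

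The crux is the three-term hexagon, the second equation of \eqref{hexagons-M}, which differs from its group analogue \eqref{hexagons-GRT} by the half-monodromy factors $\exp(A/2)$, $\exp(B/2)$, $\exp(C/2)$. I would obtain it by composing several instances of the pentagon relation around the faces of the associahedral tiling of $\overline{\frak M}_{0,5}(\bold R)$ that are adjacent to the stratum exhibiting the threefold symmetry of $\{0,1,\infty\}$, simplifying by the duality relation already in hand. The factors $\exp(t^{ij}/2)$ arise as the local half-twists around the boundary divisors along which two of the five marked points collide, and the decisive point is that the degree-two part of the hexagon equation forces precisely $c_{AB}=1/24$ — the value arranged by the scaling — so that these half-twists are correctly calibrated; the higher-degree part is then propagated by the pentagon relation alone.

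The main obstacle is this last step: controlling the local-monodromy contributions at the colliding-points divisors and verifying that, once the quadratic term equals $1/24$, the pentagon equation determines the hexagon in every degree. This is where the combinatorial input of \cite{F2}, organized by the cell decomposition of $\overline{\frak M}_{0,5}$ and the dihedral symmetries, is indispensable; by comparison the scaling reduction and the degree-two computation are routine.
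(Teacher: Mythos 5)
The paper itself gives no proof of this statement: Theorem \ref{Furusho paper} is imported wholesale from \cite{F2}, so there is no internal argument to measure your proposal against, and the only parts that can be assessed are the ones you actually carry out. Those parts are correct. The rescaling $t^{ij}\mapsto\mu^{-1}t^{ij}$ is an automorphism of each $\frak t_n$ (the defining relations are homogeneous) and commutes with every map $x\mapsto x^f$, so $g_\mu(A,B)=g(A/\mu,B/\mu)$ is again a group-like solution of \eqref{pentagon-GRT} with $c_B(g_\mu)=0$ and $c_{AB}(g_\mu)=1/24$ when $\mu^2=24\,c_{AB}(g)$; and the degree-two part of \eqref{hexagons-M} (with $C=-A-B$ and $g=1+c_{AB}[A,B]+\cdots$) does force $c_{AB}=1/24$. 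This is the right way to relate part (3) to a normalized statement.

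The genuine gap is that the normalized statement itself --- a group-like pentagon solution with $c_B=0$, $c_{AB}=1/24$ satisfies both equations of \eqref{hexagons-M} --- is exactly the main theorem of \cite{F2}, and your proposal does not prove it; you explicitly defer it to ``the combinatorial input of \cite{F2}'', which makes the attempt circular as a blind proof. Moreover, the mechanism you sketch would not work as stated. The pentagon-composition combinatorics on the tiling of $\overline{\frak M}_{0,5}(\bold R)$ --- essentially the alternating-sum argument that this paper adapts in its proof of Theorem \ref{pentagon and octagon in Lie case} --- can only produce relations that are words in substitution instances of $g$ itself; no factors $\exp(t^{ij}/2)$ can appear, which is precisely why that method yields the exponential-free relations \eqref{two hexagons}, \eqref{hexagons-GRT} and \eqref{octagon}, i.e.\ parts (1) and (2). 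For an abstract solution $g$ there is no monodromy, so ``local half-twists around boundary divisors'' is a heuristic borrowed from the KZ case, not an argument; producing the $\exp(A/2)$, $\exp(B/2)$, $\exp(C/2)$ factors algebraically, and showing that the single calibration $c_{AB}=1/24$ propagates correctly to every degree, is the actual content of \cite{F2} and is the step missing from your proposal.
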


\begin{rem}
In \cite{F3} it is shown that the pentagon equation \eqref{pentagon-GRT}
implies the double shuffle relation and the regularization relation,
which are one of the fundamental relations among multiple zeta values.
\end{rem}

\section{The cyclotomic Grothendieck-Teichm\"{u}ller group}\label{cyclotomic GRT}
This section is a review of the first named author's
theory on the cyclotomic analogues of associators
in \cite{E}.

Here we recall the notations
\footnote{
Several of them are changed for our convenience.
} in \cite{E}:
For $n\geqslant 2$ and $N\geqslant 1$,
the Lie algebra $\frak t_{n,N}$ is 
the completed $\bold k$-Lie algebra
with generators 
$$t^{1i} \  (2\leqslant i \leqslant n), \text{ and } \ 
t(a)^{ij} \  (i\neq j, 2\leqslant i,j\leqslant n, 
a\in\bold Z/N\bold Z)$$
and relations 
$$t(a)^{ij}=t(-a)^{ji}, \quad$$
$$[t(a)^{ij},t(a+b)^{ik}+t(b)^{jk}]=0,$$
$$[t^{1i}+t^{1j}+\sum_{c\in\bold Z/N\bold Z} t(c)^{ij},t(a)^{ij}]=0,$$
$$[t^{1i},t^{1j}+\sum_{c\in\bold Z/N\bold Z} t(c)^{ij}]=0,$$
$$[t^{1i},t(a)^{jk}]=0 \ 
\text{and } \ 
[t(a)^{ij},t(b)^{kl}]=0$$  
for all $a$, $b\in\bold Z/N\bold Z$ and
all distinct $i$, $j$, $k$, $l$
($2\leqslant i,j,k,l \leqslant n$).\\
We note that $\frak t_{n,1}$ is equal to $\frak t_{n}$ for $n\geqslant 2$.
We have a natural injection $\frak t_{n-1,N}\hookrightarrow\frak t_{n,N}$.
The Lie subalgebra $\frak f_{n,N}$ of $\frak t_{n,N}$ 
generated by $t^{1n}$ and
$t(a)^{in}$ ($2\leqslant  i \leqslant n-1$, $a\in\bold Z/N\bold Z$)
is free of rank $(n-2)N+1$
and forms an ideal of $\frak t_{n,N}$.
Actually it shows that $\frak t_{n,N}$ is a semi-direct product of
$\frak f_{n,N}$ and $\frak t_{n-1,N}$.
The element $z_{n,N}=\sum_{1\leqslant i<j\leqslant n}t^{ij}$
with $t^{ij}=\sum_{a\in\bold Z/N\bold Z}t(a)^{ij}$
($2\leqslant i<j\leqslant n$) is central in $\frak t_{n,N}$.
Put $\frak t^0_{n,N}$ to be the Lie subalgebra of $\frak t_{n,N}$
with the same generators except $t^{1n}$ and the same relations as $\frak t_{n,N}$.
Then we have 
$$\frak t_{n,N}=\frak t^0_{n,N}\oplus\bold k\cdot z_{n,N}.$$
Especially when $n=3$, $\frak t^0_{3,N}$ is free Lie algebra $\frak F_{N+1}$
of rank $N+1$ with generators $A:=t^{12}$ and $B(a)=t(a)^{23}$ ($a\in\bold Z/N\bold Z$).

For a partially defined map 
$f:\{1,\dots,m\}\to\{1,\dots,n\}$
such that $f(1)=1$,
the Lie algebra morphism $\frak t_{n,N}\to\frak t_{m,N}$:
$x\mapsto x^f=x^{f^{-1}(1),\dots,f^{-1}(n)}$ is uniquely defined by
$$(t(a)^{ij})^f=\sum_{i'\in f^{-1}(i),j'\in f^{-1}(j)}t(a)^{i'j'} \quad
(i\neq j, 2\leqslant i,j\leqslant n)$$ 
and
$$
(t^{1j})^f=\sum_{j'\in f^{-1}(j)} t^{1j'}+
\frac{1}{2}\sum_{j',j''\in f^{-1}(j)}\sum_{c\in\bold Z/N\bold Z}t(c)^{j'j''}+
\sum_{i'\neq 1\in f^{-1}(1), j'\in f^{-1}(j)}\sum_{c\in\bold Z/N\bold Z}t(c)^{i'j'}$$
($2\leqslant j\leqslant n$).
Again for a partially defined map 
$g:\{2,\dots,m\}\to\{1,\dots,n\}$,
the Lie algebra morphism $\frak t_{n}\to\frak t_{m,N}$:
$x\mapsto x^g=x^{g^{-1}(1),\dots,g^{-1}(n)}$ is uniquely defined by
$$(t^{ij})^g=\sum_{i'\in g^{-1}(i),j'\in g^{-1}(j)}t(0)^{i'j'}
\qquad
(i\neq j, 1\leqslant i,j\leqslant n).$$

\begin{defn}[\cite{E}]
For $N\geqslant 1$,
the Lie algebra $\frak{grtm}_{(\bar 1,1)}(N,\bold k)$ is defined to be the
set of pairs $(\varphi,\psi)\in \frak t^0_{3}\times\frak t^0_{3,N}$
satisfying $\varphi\in\frak{grt}_1(\bold k)$,\\
the {\it mixed pentagon equation} in $\frak t^0_{4,N}$ 
\begin{equation}\label{mixed pentagon}
\psi^{1,2,34}+\psi^{12,3,4}=
\varphi^{2,3,4}+\psi^{1,23,4}+\psi^{1,2,3},
\end{equation}
the {\it octagon equation} in $\frak t^0_{3,N}$
\begin{align}\label{octagon}
 &\psi\bigl(A,B(0),B(1),\dots,B(i),\dots,B(N-1)\bigr) \\ \notag
-&\psi\bigl(A,B(1),B(2),\dots,B(i+1),\dots,B(0)\bigr)\\ \notag
+&\psi\bigl(C,B(1),B(0),\dots,B(N+1-i),\dots,B(2)\bigr) \\ \notag
-&\psi\bigl(C,B(0),B(N-1),\dots,B(N-i),\dots,B(1)\bigr)=0
\end{align}
with $A+\sum_{a\in\bold Z /N\bold Z}B(a)+C=0$,\\
the {\it special derivation condition} in $\frak t^0_{3,N}$
\begin{align}\label{special-b}
\sum_{a\in\bold Z/N\bold Z}
&\Bigl[\psi\bigl(A,B(a),B(a+1),\dots,B(a+i),\dots,B(a-1)\bigr),B(a)\Bigr] \\ \notag
+\Bigl[\psi\bigl(&A,B(0),B(1),\dots,B(i),\dots,B(N-1)\bigr) \\ \notag 
&-\psi\bigl(C,B(0),B(N-1),\dots,B(N-i),\dots,B(1)\bigr),C\Bigr]
=0
\end{align}
and $c_{B(0)}(\psi)=0$.
\footnote{
For our convenience, we slightly change the original definition
by adding the small condition $c_{B(0)}(\psi)=0$.
The relation to the original Lie algebra is the direct sum decomposition of
Lie algebras ${\frak g}^\text{original}={\frak g}\oplus\bold k\cdot B(0)$,
where ${\frak g}=\frak{grtm}_{(\bar 1,1)}(N,\bold k)$.
}
\end{defn}

Here for any $\bold k$-algebra homomorphism $\iota:U\frak F_{N+1}\to S$
the image $\iota(\varphi)\in S$ is denoted 
by $\varphi(\iota(A),\iota(B(0)),\dots,\iota(B(N-1)))$.
The Lie algebra structure is given by
\begin{equation}
\langle(\varphi_1,\psi_1), (\varphi_2,\psi_2)\rangle
=(\langle\varphi_1,\varphi_2\rangle,
\langle\psi_1,\psi_2\rangle)
\end{equation}
with
$$
\langle\varphi_1,\varphi_2\rangle=[\varphi_1,\varphi_2]
+D_{\varphi_2}(\varphi_1)-D_{\varphi_1}(\varphi_2)
\text{ and }
\langle\psi_1,\psi_2\rangle=[\psi_1,\psi_2]
+\bar{D}_{\psi_2}(\psi_1)-\bar{D}_{\psi_1}(\psi_2).$$
Here $\bar{D}_\psi$ means the derivation of $\frak t^0_{3,N}$
defined by
$$\bar{D}_\psi(A)=[\psi,A], \
\bar{D}_\psi(B(a))=
[\psi-\psi\bigl(A,B(a),B(a+1),\dots,B(a-1)\bigr),B(a)]$$
for $a\in\bold Z/N\bold Z$ and
$$\bar{D}_\psi(C)=
[\psi\bigl(C,B(0),B(N-1),\dots,B(1)\bigr),C].$$
The Lie algebra structure is realised by the embedding
$$\frak{grtm}_{(\bar 1,1)}(N,\bold k)\hookrightarrow
\mathrm{Der}(\frak t^0_{3})\times \mathrm{Der}(\frak t^0_{3,N})$$
sending $(\varphi,\psi)\mapsto (D_\varphi,\bar{D}_\psi)$.

\begin{rem}
It is shown in \cite{E} that
the special derivation condition
\eqref{special-b} for $\psi$ implies 
the octagon equation \eqref{octagon}.
\end{rem}

\begin{defn}[\cite{E}]
For $N\geqslant 1$,
the group $\mathrm{GRTM}_{(\bar 1,1)}(N,\bold k)$ is defined to be the
set of pairs $(g,h)\in \exp\frak t^0_{3}\times\exp\frak t^0_{3,N}$
satisfying $g\in \mathrm{GRT}_1(\bold k)$, $c_{B(0)}(h)=0$,\\
the {\it mixed pentagon equation} in $\exp\frak t^0_{4,N}$
\begin{equation}\label{mixed pentagon-GRTM}
h^{1,2,34}h^{12,3,4}=
g^{2,3,4}h^{1,23,4}h^{1,2,3},
\end{equation}
the {\it octagon equation} in $\exp\frak t^0_{3,N}$
\begin{align}\label{octagon-GRTM}
h\bigl(&A,B(1),B(2),\dots,B(0)\bigr)^{-1} 
h\bigl(C,B(1),B(0),\dots,B(2)\bigr)\cdot  \\ \notag
&h\bigl(C,B(0),B(N-1),\dots,B(1)\bigr)^{-1}
h\bigl(A,B(0),B(1),\dots,B(N-1)\bigr)
=1
\end{align}
with $A+\sum_{a\in\bold Z /N\bold Z}B(a)+C=0$ and \\
the {\it special action condition} in $\exp\frak t^0_{3,N}$
\begin{equation}\label{special-b-GRTM}
A+
\sum_{a\in\bold Z/N\bold Z}
Ad(\tau_ah^{-1}
)(B(a)) 
+Ad\Bigl(h^{-1} 
\cdot h\bigl(C,B(0),B(N-1),\dots,B(1)\bigr)\Bigr)(C)
=0
\end{equation}
where $\tau_a$ ($a\in\bold Z /N\bold Z $) is the automorphism defined by $A\mapsto A$ and
$B(c)\mapsto B(c+a)$ for all $c\in\bold Z /N\bold Z $.
\end{defn}

It forms a group by the multiplication
\begin{align}\label{multiplication-GRTM}
(g_1,h_1)\circ & (g_2,h_2)=\Bigl(g_2(A,B)\cdot g_1(A,Ad(g_2^{-1})(B)),
h_2\bigl(A,B(0),B(1),\dots,B(N-1)\bigr)\cdot \\ \notag
h_1&\bigr(A,Ad(h_2^{-1})B(0), 
Ad(\tau_1h_2^{-1})B(1),
\dots, Ad(\tau_{N-1}h_2^{-1})B(N-1)\bigl)\Bigr).
\end{align}
The group structure is realised by the embedding (but opposite homomorphism)
$$\mathrm{GRTM}_{(\bar 1,1)}(N,\bold k)\hookrightarrow
\mathrm{Aut}\frak t^0_{3}\times \mathrm{Aut}\frak t^0_{3,N}$$
sending $(g,h)$ to the automorphism $(A_g,\bar A_h)$
where $\bar A_h$ is
defined by $A\mapsto A$ and $B(a)\mapsto Ad(\tau_ah^{-1})\left({B(a)}\right)$
for $a\in\bold Z/N\bold Z$.
We note that its associated Lie algebra is $\frak{grtm}_{(\bar 1,1)}(\bold k)$.

\begin{defn}[\cite{E}]
The torsor $\mathrm{Pseudo}_{(\bar 1,1)}(N,\bold k)$ is defined to be the
set of pairs $(g,h)\in \exp\frak t^0_{3}\times\exp\frak t^0_{3,N}$
satisfying $g\in \mathrm{M}_1(\bold k)$, $c_{B(0)}(h)=0$,
the mixed pentagon equation \eqref{mixed pentagon-GRTM}
and the following variant of octagon equation in $\exp\frak t^0_{3,N}$
\begin{align}\label{octagon-Pseudo}
&h\bigl(A,B(1),B(2),\dots,B(0)\bigr)^{-1}\exp\{\frac{B(1)}{2}\}
h\bigl(C,B(1),B(0),\dots,B(2)\bigr)\exp\{\frac{C}{N}\}\cdot  \\ \notag
h\bigl(C,B(0)&,B(N-1),\dots,B(1)\bigr)^{-1}\exp\{\frac{B(0)}{2}\}
\cdot h\bigl(A,B(0),B(1),\dots,B(N-1)\bigr)\exp\{\frac{A}{N}\}
=1.
\end{align}
\end{defn}

It forms a right $\mathrm{GRTM}_{(\bar 1,1)}(N,\bold k)$-torsor 
by \eqref{multiplication-GRTM} with $(g_1,h_1)\in \mathrm{Pseudo}_{(\bar 1,1)}(N,\bold k)$
and $(g_2,h_2)\in \mathrm{GRTM}_{(\bar 1,1)}(N,\bold k)$.

\begin{rem}
In contrast with remark \ref{GRT_1=M_0},
it is not known if \eqref{special-b} and \eqref{special-b-GRTM}
follow from the rest of the equations (cf.\cite{E} remark 7.8).
\end{rem}

\begin{rem}
In \cite{F4} it is shown that the mixed pentagon equation 
\eqref{mixed pentagon-GRTM}
implies the double shuffle relation and the regularization relation
among multiple $L$-values.
\end{rem}

Let $N,N'\geqslant 1$ with $N'|N$. Put $d=N/N'$.
The morphism $\pi_{NN'}:\frak t_{n,N}\to\frak t_{n,N'}$
is defined by
$$t^{1i}\mapsto dt^{1i} \text{ and }
t^{ij}(a)\mapsto t^{ij}(\bar a) \quad
(i\neq j, 2\leqslant i,j\leqslant n, 
a\in\bold Z/N\bold Z),$$
where $\bar a\in\bold Z/N'\bold Z$ means the image of $a$
under the map $\bold Z/N\bold Z\to\bold Z/N'\bold Z$.\\
The morphism $\delta_{NN'}:\frak t_{n,N}\to\frak t_{n,N'}$
is defined by
$$t^{1i}\mapsto t^{1i} \text{ and }
t^{ij}(a)\mapsto 
\begin{cases}
t^{ij}(a/d) \quad\text{ if } \ d|a, \\
t^{ij}(a)\mapsto 0 \text{ if } \ d\nmid a \\
\end{cases}
$$
($i\neq j$, $2\leqslant i,j\leqslant n$, 
$a\in\bold Z/N\bold Z$).
For $\psi\in\frak t^0_{3,N}$,
put $\rho_{NN'}(\psi)=c_{B(0)}(\pi_{NN'}(\psi))-c_{B(0)}(\psi)$.

The morphism $\pi_{NN'}$ (resp. $\delta_{NN'}$)
$:\frak t_{n,N}\to\frak t_{n,N'}$ induces the
morphisms \\
$\frak{grtm}_{(\bar 1,1)}(N,\bold k)\to
\frak{grtm}_{(\bar 1,1)}(N',\bold k)$,
$\mathrm{GRTM}_{(\bar 1,1)}(N,\bold k)\to
\mathrm{GRTM}_{(\bar 1,1)}(N',\bold k)$ and\\
$\mathrm{Pseudo}_{(\bar 1,1)}(N,\bold k)\to
\mathrm{Pseudo}_{(\bar 1,1)}(N',\bold k)$
which we denote  by the same symbol $\pi_{NN'}$ (resp. $\delta_{NN'}$).
We also remark that 
$$\frak{grtm}_{(\bar 1,1)}(1,\bold k)=\frak{grt}_1(\bold k),
\mathrm{GRTM}_{(\bar 1,1)}(1,\bold k)=\mathrm{GRT}_1(\bold k) \text{ and }
\mathrm{Pseudo}_{(\bar 1,1)}(1,\bold k)=\mathrm{M}_1(\bold k).$$

\begin{defn}[\cite{E}]
(1).
For $N\geqslant 1$,
$\frak{grtmd}_{(\bar 1,1)}(N,\bold k)$
is the Lie subalgebra of
$\frak{grtm}_{(\bar 1,1)}(N,\bold k)$ defined
by imposing the {\it distribution relation} in $\frak t^0_{3,N'}$
for all $N'|N$
\begin{equation}\label{distribution}
(\pi_{NN'}-\delta_{NN'})(\psi)=\rho_{NN'}(\psi)B(0).
\end{equation}

(2).
For $N\geqslant 1$, $\mathrm{GRTMD}_{(\bar 1,1)}(N,\bold k)$
is the subgroup of $\mathrm{GRTM}_{(\bar 1,1)}(N,\bold k)$
defined by imposing the {\it distribution relation} in $\exp\frak t^0_{3,N'}$
for all $N'|N$
\begin{equation}\label{distribution-GRTMD}
\pi_{NN'}(h)=e^{\rho_{NN'}(h)B(0)}\delta_{NN'}(h).
\end{equation}

(3).
For $N\geqslant 1$,
the $\mathrm{GRTMD}_{(\bar 1,1)}(N,\bold k)$-torsor $\mathrm{Psdist}_{(\bar 1,1)}(N,\bold k)$
is the subtorsor of
$\mathrm{Pseudo}_{(\bar 1,1)}(N,\bold k)$
defined by imposing the distribution relation \eqref{distribution-GRTMD}
in $\exp\frak t^0_{3,N'}$ for all $N'|N$.
\end{defn}

\begin{rem}
A typical example of  an element of $\mathrm{Psdist}_{(\bar 1,1)}(N,\bold C)$ is 
$$\varphi_{KZ}^N\bigl(A,B(0),\dots,B(N-1)\bigr)=
\varPhi_{KZ}^N\Bigl(\frac{A}{2\pi\sqrt{-1}},\frac{B(0)}{2\pi\sqrt{-1}},\dots,\frac{B(N-1)}{2\pi\sqrt{-1}}\Bigr)$$
where $\varPhi_{KZ}^N(A,B(0),\dots,B(N-1))$ is 
the {\it cyclotomic Drinfeld associator} constructed in \cite{E}.
It has the following expression:
\begin{align*}
\varPhi^N_{KZ}=1+&
\sum (-1)^m L(k_1,\cdots,k_m;
\zeta_1,\dots,\zeta_m)
A^{k_m-1}B(a_m)\cdots A^{k_1-1}B(a_1) \\ \notag
&+\text{(regularized terms)}
\end{align*}
where $\zeta_1=\zeta_N^{a_2-a_1}$, \dots,
$\zeta_{m-1}=\zeta_N^{a_m-a_{m-1}}$,
$\zeta_m=\zeta_N^{-a_m}$ 
with $\zeta_N=\exp\{\frac{2\pi\sqrt{-1}}{N}\}$
and
$L(k_1,\cdots,k_m;\zeta_1,\cdots,\zeta_m)$
are {\it multiple $L$-values} defined by the following series
\begin{equation}\label{multiple L-value}
L(k_1,\cdots,k_m;\zeta_1,\cdots,\zeta_m)
:=\sum_{0<n_1<\cdots<n_m}\frac{\zeta_1^{n_1}\cdots \zeta_m^{n_m}}
{n_1^{k_1}\cdots n_m^{k_m}}
\end{equation}
for $m$, $k_1,\dots, k_m\in {\bf N} (={\bf Z}_{>0})$
and $\zeta_1,\dots,\zeta_m\in\mu_N$
with $(k_m,\zeta_m)\neq (1,1)$.

\end{rem}

The following says that the distribution relation for $N'=1$
follows from the mixed pentagon equation.
Note that the distribution relation for $N'=N$ is automatically satisfied.

\begin{prop}\label{mixed pentagon and distribution}
(1).
Suppose that $(\varphi,\psi)\in\frak t^0_3\times\frak t^0_{3,N}$ satisfies the mixed pentagon equation
\eqref{mixed pentagon} in $\frak t^0_{4,N}$.
Then it also satisfies the distribution relation \eqref{distribution} for $N'=1$
in $\frak t^0_{3,1}=\frak t^0_3$.

(2).
Suppose that $(g,h)\in\exp\frak t^0_3\times\exp\frak t^0_{3,N}$
satisfies the mixed pentagon equation
\eqref{mixed pentagon-GRTM} in $\exp\frak t^0_{4,N}$.
Then it also satisfies the distribution relation \eqref{distribution-GRTMD}
for $N'=1$
in $\exp\frak t^0_{3,1}=\exp\frak t^0_3$.
\end{prop}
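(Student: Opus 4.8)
The plan is to transport the mixed pentagon equation down to the classical ($N=1$) setting by means of the two morphisms $\pi_{N1}$ and $\delta_{N1}$, and then to collapse the resulting three-term relation by forgetting one strand.

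I would first check that $\pi_{N1}$ and $\delta_{N1}$ are compatible with all the cabling operations $x\mapsto x^f$: this is a direct verification on the generators $t^{1j}$ and $t(a)^{ij}$ (the extra terms in the cyclotomic insertion formula for $t^{1j}$ survive because $\sum_{c}$ contributes a factor $N=d$ matching $\pi_{N1}(t^{1j})=Nt^{1j}$). Since both morphisms send $\varphi^{2,3,4}$ to the same element $\varphi(t^{23},t^{34})$, applying $\pi_{N1}$ and then $\delta_{N1}$ to \eqref{mixed pentagon} produces two copies of the $N=1$ mixed pentagon, for the pairs $(\varphi,\pi_{N1}\psi)$ and $(\varphi,\delta_{N1}\psi)$, with the same $\varphi$; here $p:=\pi_{N1}\psi$ and $q:=\delta_{N1}\psi$ lie in $\frak t^0_3=\frak F_2$ on $A=t^{12}$, $B=t^{23}$.

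The heart of the argument is to apply the strand-forgetting Lie morphism $\partial_1\colon\frak t_4\to\frak t_3$, $t^{1j}\mapsto 0$, $t^{ij}\mapsto t^{(i-1)(j-1)}$. All four cabled terms become elementary except $\partial_1(p^{1,23,4})=p(t^{12},t^{13}+t^{23})$; here I would invoke the infinitesimal braid relation $[t^{12},t^{13}+t^{23}]=0$, so that this Lie series in two commuting arguments reduces to its linear part $\alpha_p t^{12}+\beta_p(t^{13}+t^{23})$, with $\alpha_p=c_A(p)$, $\beta_p=c_B(p)$. Feeding this back, the $N=1$ mixed pentagon collapses to the rigid identity $p=\varphi+\alpha_p A+\beta_p B$, and similarly $q=\varphi+\alpha_q A+\beta_q B$. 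Subtraction kills $\varphi$ and yields $(\pi_{N1}-\delta_{N1})(\psi)=(\alpha_p-\alpha_q)A+(\beta_p-\beta_q)B$. Finally, reading off the coefficient of $t^{12}$ in the degree-one part of \eqref{mixed pentagon} gives $c_A(\psi)=0$, so $\alpha_p=Nc_A(\psi)=0$ and $\alpha_q=c_A(\psi)=0$, while $\beta_p-\beta_q=c_{B(0)}(\pi_{N1}\psi)-c_{B(0)}(\psi)=\rho_{N1}(\psi)$; this is precisely \eqref{distribution} for $N'=1$, proving (1).

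For (2) I would run the identical three steps with products in place of sums. Applying $\partial_1$ to the $N=1$ group mixed pentagon for $P:=\pi_{N1}h$ turns it into $e^{\beta_P(t^{12}+t^{13})}P=g\,P(t^{12},t^{13}+t^{23})\,e^{\beta_P t^{12}}$. The one genuinely new difficulty is the non-commutative bookkeeping, which I would handle as follows: by $[t^{12},t^{13}+t^{23}]=0$ one has $P(t^{12},t^{13}+t^{23})=e^{\alpha_P t^{12}}e^{\beta_P(t^{13}+t^{23})}$, and then the centrality of $z_3=t^{12}+t^{13}+t^{23}$ (rewriting $t^{12}+t^{13}=z_3-t^{23}$ and $t^{13}+t^{23}=z_3-t^{12}$) lets every factor involving $z_3$ and $t^{13}$ cancel, leaving $P=e^{\beta_P B}\,g\,e^{\alpha_P A}$, and the same for $D:=\delta_{N1}h$. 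Eliminating $g$ gives $P=e^{(\beta_P-\beta_D)B}D\,e^{(\alpha_P-\alpha_D)A}$. As in (1), the degree-one part of \eqref{mixed pentagon-GRTM} forces $c_A(h)=0$, hence $\alpha_P=\alpha_D=0$ and the trailing factor disappears, while $\beta_P-\beta_D=\rho_{N1}(h)$; thus $\pi_{N1}(h)=e^{\rho_{N1}(h)B(0)}\delta_{N1}(h)$, which is \eqref{distribution-GRTMD} for $N'=1$. I expect the group-case cancellations using $z_3$ to be the only delicate point; everything else is a direct computation.
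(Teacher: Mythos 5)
Your proposal is correct and takes essentially the same route as the paper: the paper's proof likewise pushes \eqref{mixed pentagon} (resp.\ \eqref{mixed pentagon-GRTM}) through $\pi_{N1}$ and $\delta_{N1}$ composed with elimination of the first strand, arriving at exactly your identities $\pi_{N1}(\psi)=\varphi+Nc_A(\psi)A+c_B(\pi_{N1}(\psi))B$ and $\delta_{N1}(\psi)=\varphi+c_A(\psi)A+c_{B(0)}(\psi)B$ (and their multiplicative analogues $\pi_{N1}(h)=e^{c_B(\pi_{N1}(h))B}g$, $\delta_{N1}(h)=e^{c_{B(0)}(h)B}g$), and then concludes by $c_A(\psi)=0$, which is Lemma \ref{c=0} proved by the same degree-one inspection you use. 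Your write-up simply makes explicit the steps the paper leaves implicit --- the compatibility of $\pi_{N1},\delta_{N1}$ with the cyclotomic cabling maps, the collapse of $\psi(t^{12},t^{13}+t^{23})$ to its linear part via $[t^{12},t^{13}+t^{23}]=0$, and the cancellations using the central element $z_3$ in the group case --- all of which check out.
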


\begin{proof}
(1).
By taking the image of \eqref{mixed pentagon} by the composition of $\pi_{N1}$ with 
the projection $\frak t^0_{4,1}=\frak t^0_{4}\to\frak t^0_{3}$ eliminating the first strand,
we get 
$$\pi_{N1}(\psi)=\varphi+Nc_A(\psi)A+c_B(\pi_{N1}(\psi))B.$$
Next by taking the image of \eqref{mixed pentagon} by the composition  of
$\delta_{N1}$ with the projection,
we get 
$$\delta_{N1}(\psi)=\varphi+c_A(\psi)A+c_{B(0)}(\psi)B.$$
By the lemma below
these two equations give \eqref{distribution} for $N'=1$.

(2).
Similarly we obtain
$\pi_{N1}(h)=e^{c_B(\pi_{N1}(h))B}g$ and
$\delta_{N1}(h)=e^{c_{B(0)}(h)B}g$ from \eqref{mixed pentagon-GRTM},
which implies the claim.
\end{proof}

\begin{lem}\label{c=0}
Suppose that $(\varphi,\psi)\in\frak t^0_3\times\frak t^0_{3,N}$ 
(resp. $\in\exp\frak t^0_3\times\exp\frak t^0_{3,N}$)
satisfies the mixed pentagon equation
\eqref{mixed pentagon} in $\frak t^0_{4,N}$
(resp. \eqref{mixed pentagon-GRTM} in $\exp\frak t^0_{4,N}$).
Then $c_A(\psi)=0$.
\end{lem}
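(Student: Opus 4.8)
The plan is to extract $c_A(\psi)$ (the coefficient of the generator $A = t^{12}$) from the degree-one part of the mixed pentagon equation. All five cabling morphisms $\frak t_{3,N}\to\frak t_{4,N}$ and $\frak t_3\to\frak t_{4,N}$ occurring in \eqref{mixed pentagon} are morphisms of graded Lie algebras, so the equation decomposes degree by degree; I would restrict to its linear part, which is an explicit identity among the generators $t^{1j}$, $t(a)^{ij}$ of $\frak t^0_{4,N}$.

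First I would track only the coefficient of the single generator $t^{12}$ of $\frak t^0_{4,N}$. Since each $B(a) = t(a)^{23}$ maps under every cabling to combinations of generators of the form $t(c)^{ij}$ with $2 \le i,j$, no image of a $B(a)$ ever contributes a $t^{12}$; likewise $\varphi^{2,3,4}$ is built from $t(0)^{23}$ and $t(0)^{34}$ and contributes none. Hence only the images of $A = t^{12}$ matter, and I would compute these from the formula for $(t^{1j})^f$: under $\psi^{1,2,34}$ one has $A\mapsto t^{12}$, under $\psi^{1,2,3}$ (the inclusion omitting the fourth strand) again $A\mapsto t^{12}$, and under $\psi^{1,23,4}$ the formula gives $A \mapsto t^{12}+t^{13}+\sum_c t(c)^{23}$, each contributing exactly one copy of $t^{12}$; by contrast $\psi^{12,3,4}$ sends $A\mapsto t^{13}+\sum_c t(c)^{23}$, with no $t^{12}$. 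Comparing the coefficient of $t^{12}$ on the two sides of \eqref{mixed pentagon} then reads $c_A(\psi) = 2\,c_A(\psi)$, forcing $c_A(\psi) = 0$.

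For the group-like case \eqref{mixed pentagon-GRTM} I would pass to degree-one parts of the equality of group-like elements: since each factor has constant term $1$, the Baker--Campbell--Hausdorff corrections are of degree $\ge 2$, so the degree-one part of a product is the sum of the degree-one parts, and $c_A(h)$ is exactly the coefficient of $t^{12}$ in the degree-one part of $h$. The identical bookkeeping yields $c_A(h) = 2\,c_A(h)$, hence $c_A(h) = 0$. The one step I would double-check is the evaluation of $(t^{12})^f$ under the two doubling cablings $\psi^{12,3,4}$ and $\psi^{1,23,4}$, where the formula for $(t^{1j})^f$ carries the extra terms $\frac12\sum_{j',j''}\sum_c t(c)^{j'j''}$ and $\sum_{i'\ne 1}\sum_c t(c)^{i'j'}$; one must confirm that neither correction produces a $t^{12}$, so that the asymmetry between the single contribution on the left and the two on the right is genuine. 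This is the crux that forces $c_A(\psi)=0$, and it is the only place where the structure of the cabling maps, rather than mere degree counting, is used.
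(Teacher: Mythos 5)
Your proposal is correct and is exactly the argument the paper intends: its proof of Lemma \ref{c=0} consists of the single sentence ``It can be proved directly by inspecting the terms of degree 1,'' and your computation of the images of $t^{12}$ under the five cabling maps (yielding $c_A(\psi)=2c_A(\psi)$, and likewise $c_A(h)=2c_A(h)$ in the group-like case since degree-one parts of products of group-like elements add) is precisely that inspection carried out in full.
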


\begin{proof}
It can be proved directly by inspecting the terms of degree 1.
\end{proof}

As a corollary, we have

\begin{cor}\label{grtmd=grtm}
For a prime $p$, we have
$$\frak{grtmd}_{(\bar 1,1)}(p,\bold k)=
\frak{grtm}_{(\bar 1,1)}(p,\bold k),$$
$$\mathrm{GRTMD}_{(\bar 1,1)}(p,\bold k)=
\mathrm{GRTM}_{(\bar 1,1)}(p,\bold k),$$
$$\mathrm{Psdist}_{(\bar 1,1)}(p,\bold k)=
\mathrm{Pseudo}_{(\bar 1,1)}(p,\bold k).$$
\end{cor}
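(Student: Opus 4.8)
The plan is to observe that for a prime $p$ the only divisors are $N' = 1$ and $N' = p$, so the three ``$\mathrm{D}$''-objects $\frak{grtmd}_{(\bar 1,1)}(p,\bold k)$, $\mathrm{GRTMD}_{(\bar 1,1)}(p,\bold k)$ and $\mathrm{Psdist}_{(\bar 1,1)}(p,\bold k)$ are cut out from $\frak{grtm}_{(\bar 1,1)}(p,\bold k)$, $\mathrm{GRTM}_{(\bar 1,1)}(p,\bold k)$ and $\mathrm{Pseudo}_{(\bar 1,1)}(p,\bold k)$ respectively by imposing the distribution relations only for these two values of $N'$. I would then show that both relations hold automatically on the ambient objects, so that the defining inclusions are in fact equalities.

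First I would dispose of the case $N' = N = p$. Here $d = N/N' = 1$, so $\pi_{pp} = \delta_{pp} = \mathrm{id}$ and $\rho_{pp} = 0$; hence both sides of \eqref{distribution} (resp. \eqref{distribution-GRTMD}) reduce to $0 = 0$ (resp. to $h = h$), as already recorded in the note preceding Proposition \ref{mixed pentagon and distribution}. Thus the $N' = p$ relation is vacuous for every element.

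Next I would treat $N' = 1$. By definition every element of $\frak{grtm}_{(\bar 1,1)}(p,\bold k)$ (resp. $\mathrm{GRTM}_{(\bar 1,1)}(p,\bold k)$, $\mathrm{Pseudo}_{(\bar 1,1)}(p,\bold k)$) satisfies the mixed pentagon equation \eqref{mixed pentagon} (resp. \eqref{mixed pentagon-GRTM}). Proposition \ref{mixed pentagon and distribution} asserts exactly that the mixed pentagon equation forces the distribution relation \eqref{distribution} (resp. \eqref{distribution-GRTMD}) for $N' = 1$. Hence this relation too is automatic on the ambient object.

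Combining the two cases, for $N = p$ the complete list of imposed distribution relations adds no constraint whatsoever, so that $\frak{grtmd}_{(\bar 1,1)}(p,\bold k) = \frak{grtm}_{(\bar 1,1)}(p,\bold k)$, $\mathrm{GRTMD}_{(\bar 1,1)}(p,\bold k) = \mathrm{GRTM}_{(\bar 1,1)}(p,\bold k)$ and $\mathrm{Psdist}_{(\bar 1,1)}(p,\bold k) = \mathrm{Pseudo}_{(\bar 1,1)}(p,\bold k)$, as claimed. I expect no real obstacle here: the corollary is immediate bookkeeping once Proposition \ref{mixed pentagon and distribution} is in hand, the only point meriting a moment of care being the verification that $1$ and $p$ genuinely exhaust the divisors of $p$ and that the $N' = N$ relation is trivial, so that no other distribution constraints survive.
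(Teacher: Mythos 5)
Your proposal is correct and is exactly the paper's intended argument: the corollary follows from Proposition \ref{mixed pentagon and distribution} because for $N=p$ prime the only distribution relations imposed are for $N'=p$ (vacuous, as noted before the Proposition) and $N'=1$ (automatic from the mixed pentagon equation, which all elements of the ambient objects satisfy by definition). No gaps.
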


\begin{rem}
In \cite{DG} Deligne and Goncharov construct the motivic fundamental group
$\pi_1^{M}(\bold P^1\backslash\{0,1,\mu_N\},1_0)$
($\mu_N$: the group of $N$-th roots of unity)
with the tangential base point $1_0$ at $0$,
which determines a pro-object of the $\bold Q$-linear category
$\mathrm{MT}(\bold Z[\mu_N,\frac{1}{N}])_{\bold Q}$
of mixed Tate motives of $\bold Z[\mu_N,\frac{1}{N}]$.
This causes the morphism 
$$\varphi_N:\mathrm{LieGal}^{M}(\bold Z[\mu_N,\frac{1}{N}])\to \mathrm{Der}\frak t^0_{3,N},$$
where $\mathrm{LieGal}^{M}(\bold Z[\mu_N,\frac{1}{N}])$ is the motivic Lie algebra
of the category.
It is a graded free Lie algebra with 
$\mathrm{rk} K_{2n-1}(\bold Z[\mu_N,\frac{1}{N}])$ generators in each degree $n>0$.
The map $\varphi_N$ is shown to be injective for $N=1$ in \cite{Bw}
and for $N=2,3,4$ and $8$ in \cite{De08}. 
For $N=6$,  a certain modification of the map $\varphi_N$ is 
shown to be injective in \cite{De08}.
Partial injectivity results for $N=2p$ ($p$: a prime)
were obtained in \cite{DW}.
Because all the defining equations of $\frak{grtmd}_{(\bar 1,1)}(N,\bold k)$
are geometric,
it can be shown that 
$\mathrm{Im}\varphi_N$ is embedded in
$\frak{grtmd}_{(\bar 1,1)}(N,\bold k)\subset \mathrm{Der}\frak t^0_{3,N}$.
It is one of the fundamental questions to ask if they are equal or not.
\end{rem}

\section{Mixed pentagon and octagon equations}
\label{Mixed pentagon and octagon equation}
In this section, we focus on the case $N=2$ and
prove that the mixed pentagon equation 
implies the octagon equation.

\begin{thm}\label{pentagon and octagon in Lie case}
Let $(\varphi,\psi)\in \frak t^0_{3}\times\frak t^0_{3,2}$
be a pair satisfying $c_{B(0)}(\psi)=c_{AB(0)}(\psi)=0$ and
the mixed pentagon equation \eqref{mixed pentagon} in $\frak t^0_{4,2}$, i.e.
\begin{align*}
&\psi(t^{12},  t^{23}_++t^{24}_+,t^{23}_-+t^{24}_-)+
\psi(t^{13}+t^{23}_++t^{23}_-,t^{34}_+,t^{34}_-) \\
=\varphi(t^{23}_+,t^{34}_+)+&
\psi(t^{12}+t^{13}+t^{23}_++t^{23}_-,t^{24}_++t^{34}_+,t^{24}_-+t^{34}_-)+
\psi(t^{12},t^{23}_+,t^{23}_-)
\end{align*}
where $t^{ij}_{+}=t^{ij}(0)$ and $t^{ij}_{-}=t^{ij}(1)$.
Then $\psi$ satisfies the octagon equation \eqref{octagon}.
\end{thm}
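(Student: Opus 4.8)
The plan is to split the statement into a part controlled by the classical theory and a genuinely new cyclotomic part. For the first part I would apply the distribution morphism $\delta_{21}:\frak t^0_{4,2}\to\frak t^0_{4}$ to the mixed pentagon \eqref{mixed pentagon}. Since $\delta_{21}(\psi)=\varphi$ — this is exactly the computation in the proof of Proposition \ref{mixed pentagon and distribution}, using $c_A(\psi)=0$ from Lemma \ref{c=0} and the hypothesis $c_{B(0)}(\psi)=0$ — the compatibility of $\delta_{21}$ with the superscript operations makes the five terms of \eqref{mixed pentagon} collapse term by term to the ordinary pentagon equation \eqref{pentagon} for $\varphi$. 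The same hypotheses yield $c_B(\varphi)=c_{AB}(\varphi)=0$: since $\delta_{21}$ sends $B(0)\mapsto B$ and $B(1)\mapsto 0$, one has $c_B(\varphi)=c_{B(0)}(\psi)$ and $c_{AB}(\varphi)=c_{AB(0)}(\psi)$. Hence Theorem \ref{Furusho paper}(1) applies to $\varphi$, which therefore satisfies the duality and hexagon relations \eqref{two hexagons}. This classical input will be used to control the $\varphi$-terms appearing in the cyclotomic argument.

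For the cyclotomic part I would transplant the cell-decomposition argument of \cite{F2} to the $N=2$ Kummer cover $\tilde{\frak M}_{0,5}^2$ of $\frak M_{0,5}$. I would fix the decomposition of the compactification $\overline{\frak M}_{0,5}(\bold R)$ into pentagons, lift it along $\tilde{\frak M}_{0,5}^2\to\frak M_{0,5}$, and attach tangential base points at the corners, so that $(\varphi,\psi)$ defines a monodromy representation of the fundamental groupoid whose pentagonal $2$-cells reproduce the mixed pentagon \eqref{mixed pentagon}. The octagon \eqref{octagon} is then the relation read off a distinguished cycle of $\tilde{\frak M}_{0,5}^2$ lying over a loop in $\frak M_{0,5}$ around the branch divisor of the cover; the $\bold Z/2\bold Z$-deck symmetry resolves this loop into an octagonal cycle upstairs, which is why four $\psi$-terms (and, in the torsor version \eqref{octagon-Pseudo}, four further exponential segments) occur.

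I would then express this octagonal cycle as a concatenation of boundary faces, so that \eqref{octagon} becomes a telescoping product of images of \eqref{mixed pentagon}. Concretely this amounts to applying to \eqref{mixed pentagon} the strand-collapsing face maps $\frak t^0_{4,2}\to\frak t^0_{3,2}$ attached to the boundary divisors, and matching the four $\psi$-arguments of \eqref{octagon} by means of the deck transformation $\tau_1$ (swapping $B(0)$ and $B(1)$) together with the reflection exchanging $A$ and $C=-A-B(0)-B(1)$. Along this cycle the single $\varphi$-term of each mixed pentagon sits on an edge disjoint from the branch locus, and the relations \eqref{two hexagons} established in the first step are what allow these $\varphi$-edges to cancel, leaving the $\psi$-only identity \eqref{octagon}. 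As in Theorem \ref{Furusho paper}(1), the monodromy argument determines \eqref{octagon} only up to its low-degree part, so the degree-one and degree-two coefficients must be checked by hand; here the standing hypotheses $c_{B(0)}(\psi)=c_{AB(0)}(\psi)=0$, combined with $c_A(\psi)=0$, annihilate the undetermined constants and make the low-degree octagon hold identically.

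I expect the main obstacle to lie not in the telescoping, which is formal once the dictionary is fixed, but in the combinatorics of the cover: one must correctly match the two branches of $\tilde{\frak M}_{0,5}^2$ to the orderings $(B(0),B(1))$ and $(B(1),B(0))$ occurring in \eqref{octagon}, verify that the chosen octagonal cycle bounds a region tiled by lifted pentagonal and trivial square faces with no surviving interior edge, and check that $\tau_1$ intertwines the face maps as required. Establishing this tiling on the Kummer cover, rather than on $\frak M_{0,5}$ itself, is the new ingredient beyond \cite{F2} and the delicate heart of the proof.
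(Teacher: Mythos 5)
Your first step (applying $\delta_{21}$ to \eqref{mixed pentagon} to get $\delta_{21}(\psi)=\varphi$ via Lemma \ref{c=0} and $c_{B(0)}(\psi)=0$, hence the pentagon \eqref{pentagon} for $\varphi$, hence the duality and hexagon relations \eqref{two hexagons} by Theorem \ref{Furusho paper}(1)) is exactly the paper's first step and is correct. The gap lies in the cyclotomic half, at the point where you claim that the octagon \eqref{octagon} can be read off as a telescoping product of mixed pentagons around a cycle that ``bounds a region tiled by lifted pentagonal and trivial square faces with no surviving interior edge.'' That is not what the tiling of $\tilde{\frak M}^2_{0,5}$ gives. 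The pentagon-tiled region (algebraically: the symmetrized sum $(id+s+s^2+s^3)\sum_{\sigma\in\frak S_3}\epsilon(\sigma)\varPi^{1,\sigma(2),\sigma(3),\sigma(4)}$ of all $24$ lifted pentagons, where $\varPi$ is the mixed-pentagon defect) has boundary consisting of the hexagons, which are killed by \eqref{two hexagons}, \emph{and of two octagons}, one over $(0,0)$ and one over $(\infty,\infty)$. So the telescoping produces only the identity $\varOmega^{1,2,3}=\varOmega^{14,2,3}$ in $\frak t^0_{4,2}$, where $\varOmega:=\psi^{1,2,3}-\psi^{1,3,2}+s'(\psi)^{1,2,3}-s'(\psi)^{1,3,2}\in\frak t^0_{3,2}$ is the octagon defect: it says the defect at the origin equals the defect at infinity, not that either vanishes.

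To finish, the paper needs a genuinely algebraic ingredient that has no substitute in your outline: the lemma that any $X\in\frak t^0_{3,2}$ with $X^{1,2,3}=X^{14,2,3}$ lies in the free Lie subalgebra generated by $t^{23}_+$ and $t^{23}_-$ (proved using the freeness, from \cite{E}, of the Lie subalgebra of $\frak t^0_{4,2}$ generated by $t^{12},t^{23}_\pm,t^{24}_\pm$). This gives $\varOmega=r(t^{23}_+,t^{23}_-)$ for some $r\in\frak F_2$, and then evaluating $\varOmega$ under the two identifications $\frak t^0_{3,2}/(t^{12})\simeq\frak F_2\simeq\frak t^0_{3,2}/(t^{13})$ yields two expressions for $r$ that are negatives of one another, forcing $r=0$, i.e. $\varOmega=0$, which is precisely \eqref{octagon} for $N=2$. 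Without this step your argument stops at ``octagon at origin $=$ octagon at infinity.'' Two smaller points: your closing remark that the degree-one and degree-two parts of \eqref{octagon} must then be checked by hand has no counterpart in the actual proof --- the hypotheses $c_{B(0)}(\psi)=c_{AB(0)}(\psi)=0$ are used only to make Theorem \ref{Furusho paper}(1) applicable to $\varphi$; and the monodromy/tangential-base-point language is unnecessary, since the whole argument is a formal manipulation of the defining relations, the geometry of the Kummer cover serving only as a guide.
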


\begin{proof}
By taking the image of \eqref{mixed pentagon}
by $\delta_{21}$ and eliminating the first strand
we get $\delta_{21}(\psi)=\varphi+c_A(\psi)A$,
which by lemma \ref{c=0} implies $\delta_{21}(\psi)=\varphi$.
Then applying again $\delta_{21}$
to the mixed pentagon equation \eqref{mixed pentagon},
we get the equation \eqref{pentagon} for $\varphi$.
Then by our assumption
$c_B(\varphi)=c_{AB}(\varphi)=0$
and Theorem \ref{Furusho paper} (1),
we have \eqref{two hexagons} for $\varphi$.

For $(\varphi,\psi)\in \frak t^0_{3}\times\frak t^0_{3,2}$,
put 
$$\varPi=\varphi^{2,3,4}+\psi^{1,23,4}+\psi^{1,2,3}
-\psi^{1,2,34}-\psi^{12,3,4}$$
in $\frak t^0_{4,2}$.
Let $\frak S_3$ be the group of permutations of $\{1,2,3,4\}$ 
which fix $\{1\}$.
Then
\begin{align*}
\sum_{\sigma\in \frak S_3}&\epsilon(\sigma)\varPi^{1,\sigma(2),\sigma(3),\sigma(4)}
=(\psi^{1,2,3}-\psi^{1,3,2}) 
+(\psi^{14,3,2}-\psi^{14,2,3})
+(\psi^{13,2,4}-\psi^{13,4,2}) \\
&+(\psi^{12,4,3}-\psi^{12,3,4})
+(\psi^{1,3,4}-\psi^{1,4,3})
+(\psi^{1,4,2}-\psi^{1,2,4})
+\sum_{\sigma\in\frak S_3}\epsilon(\sigma)\varphi^{\sigma(2),\sigma(3),\sigma(4)}.
\end{align*}
There is a unique automorphism $s$ of the Lie algebra $\frak t^0_{4,2}$ such that
$$s(t^{12})=t^{13},
s(t^{13})=t^{12},
s(t^{14})=t^{14},
s(t^{23}_\pm)=t^{23}_\mp,
s(t^{24}_\pm)=t^{34}_\mp \text{ and }
s(t^{34}_\pm)=t^{24}_\pm.$$
Then $s^4=id$ and
$$s(\psi^{12,3,4})=\psi^{13,2,4},
s(\psi^{12,4,3})=\psi^{13,4,2},
s(\psi^{1,4,3})=\psi^{1,4,2},
s(\psi^{1,3,4})=\psi^{1,2,4}.$$
It follows that
\begin{align*}
\sum_{\sigma\in \frak S_3}\epsilon(\sigma)&\varPi^{1,\sigma(2),\sigma(3),\sigma(4)}
=(\psi^{1,2,3}-\psi^{1,3,2}) 
+(\psi^{14,3,2}-\psi^{14,2,3})  \\
&+(s-id)(\psi^{12,3,4}-\psi^{12,4,3}+\psi^{1,4,3}-\psi^{1,3,4})
+\sum_{\sigma\in\frak S_3}\epsilon(\sigma)\varphi^{\sigma(2),\sigma(3),\sigma(4)}.
\end{align*}
Hence

\begin{align*}
(id+s+s^2+s^3)&
\sum_{\sigma\in \frak S_3}\epsilon(\sigma)\varPi^{1,\sigma(2),\sigma(3),\sigma(4)}
=(id+s+s^2+s^3)(\psi^{1,2,3}-\psi^{1,3,2} \\
& +\psi^{14,3,2}-\psi^{14,2,3}) 
+(id+s+s^2+s^3)
\sum_{\sigma\in\frak S_3}\epsilon(\sigma)\varphi^{\sigma(2),\sigma(3),\sigma(4)}.
\end{align*}
By \eqref{mixed pentagon}, $\varPi=0$. 
By \eqref{two hexagons}, the last term is $0$.
So we have
$$
(id+s+s^2+s^3)(\psi^{1,2,3}-\psi^{1,3,2}+\psi^{14,3,2}-\psi^{14,2,3})=0.
$$
Since $s^2(X)=X$ for $X=\psi^{1,2,3},\psi^{1,3,2},\psi^{14,3,2}$ and $\psi^{14,2,3}$,
$$
(id+s)(\psi^{1,2,3}-\psi^{1,3,2}+\psi^{14,3,2}-\psi^{14,2,3})=0.
$$
Let $s'$ be the automorphism of $\frak t^0_{3,2}$ uniquely defined by $s$
sending 
$$s(t^{12})=t^{13}, s(t^{13})=t^{12} \text{ and } s(t^{23}_\pm)=t^{23}_\mp.$$
Then the above equation can be read as
$$\varOmega^{1,2,3}=\varOmega^{14,2,3} \quad \text{ in } \frak t^0_{4,2}$$
where 
$$\varOmega:=\psi^{1,2,3}-\psi^{1,3,2}+s'(\psi)^{1,2,3}-s'(\psi)^{1,3,2}
\in\frak t^0_{3,2}.$$
By the lemma below, $\varOmega$ is described as 
$\varOmega=r(t^{23}_+,t^{23}_-)$ for $r\in\frak F_2$.
So
$$
\psi(t^{12},t^{23}_+,t^{23}_-)-\psi(t^{13},t^{23}_+,t^{23}_-)+
\psi(t^{13},t^{23}_-,t^{23}_+)-\psi(t^{12},t^{23}_-,t^{23}_+)
=r(t^{23}_+,t^{23}_-).
$$
By the identifications
$\frak t^0_{3,2}/(t^{12})\simeq\frak F_2\simeq\frak t^0_{3,2}/(t^{13})$,
we have
$$
\psi(0,t^{23}_+,t^{23}_-)-\psi(-t^{23}_+-t^{23}_-,t^{23}_+,t^{23}_-)+
\psi(-t^{23}_+-t^{23}_-,t^{23}_-,t^{23}_+)-\psi(0,t^{23}_-,t^{23}_+)
=r(t^{23}_+,t^{23}_-),
$$
$$
\psi(-t^{23}_+-t^{23}_-,t^{23}_+,t^{23}_-)-\psi(0,t^{23}_+,t^{23}_-)+
\psi(0,t^{23}_-,t^{23}_+)-\psi(-t^{23}_+-t^{23}_-,t^{23}_-,t^{23}_+)
=r(t^{23}_+,t^{23}_-).
$$
These equalities give $r=0$, which means $\varOmega=0$.
It yields the validity of the octagon equation \eqref{octagon} for $\psi$.
\end{proof}

\begin{lem}
If $X\in\frak t^0_{3,2}$ satisfies $X^{1,2,3}=X^{14,2,3}$ in $\frak t^0_{4,2}$,
then $X$ belongs to the free Lie subalgebra $\frak F_2$ of rank 2 
with generators $t^{23}_+$ and $t^{23}_-$.
\end{lem}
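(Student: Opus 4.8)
The plan is to begin by making the two maps completely explicit. Writing $f$ for the partially defined map with $f^{-1}(1)=\{1,4\}$, $f^{-1}(2)=\{2\}$, $f^{-1}(3)=\{3\}$ underlying the superscript $14,2,3$, the formula for $(t^{1j})^{f}$ together with $t(c)^{42}=t(-c)^{24}$ and $N=2$ gives $(t^{12})^{14,2,3}=t^{12}+t^{24}_{+}+t^{24}_{-}$, while $(t^{23}_{a})^{14,2,3}=t^{23}_{a}=(t^{23}_{a})^{1,2,3}$ and $(t^{12})^{1,2,3}=t^{12}$. Identifying $\frak t^0_{3,2}$ with the free Lie algebra on $t^{12},t^{23}_+,t^{23}_-$, the hypothesis then says exactly that $X$ is invariant under the substitution $t^{12}\mapsto t^{12}+U$, where $U:=t^{24}_{+}+t^{24}_{-}$, the other two generators being fixed. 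Both superscript maps preserve the total degree and coincide on $\frak F_2=\langle t^{23}_+,t^{23}_-\rangle$; being linear, they let me argue one total-degree component at a time and, writing $\frak t^0_{3,2}=\frak F_2\oplus\frak i$ with $\frak i$ the ideal generated by $t^{12}$, reduce the statement to the following: if $X\in\frak i$ is homogeneous of degree $n$ and $X^{1,2,3}=X^{14,2,3}$, then $X=0$.

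Next I would exploit the semidirect decomposition $\frak t_{4,2}=\frak f_{4,2}\rtimes\frak t_{3,2}$, with $\frak f_{4,2}$ free on $t^{14},t^{24}_\pm,t^{34}_\pm$. Reading off the defining relations shows that $\frak t_{3,2}$ acts on $\frak f_{4,2}$ by \emph{degree-raising} derivations; for instance $[t^{12},t^{24}_{a}]=[t^{24}_{a},\,t^{14}+t^{24}_{a+1}]$, $[t^{23}_{c},t^{24}_{a}]=[t^{24}_{a},t^{34}_{a-c}]$, $[t^{23}_{c},t^{34}_{a}]=[t^{34}_{a},t^{24}_{a+c}]$, together with $[t^{12},t^{34}_a]=[t^{23}_c,t^{14}]=0$. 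Hence $\mathrm{ad}(\frak t_{3,2})$ carries the $k$-th lower central term $\frak f_{4,2}^{k}$ into $\frak f_{4,2}^{k+1}$, so every element of $\frak f_{4,2}$ has total degree at least its $\frak f_{4,2}$-filtration degree, with equality only for brackets of the generators alone. Since $X^{1,2,3}\in\frak t_{3,2}$, the difference $F(X):=X^{14,2,3}-X^{1,2,3}$ lies in $\frak f_{4,2}$ (both terms reduce to $X$ modulo $\frak f_{4,2}$) and is homogeneous of total degree $n$; as its image in each quotient $\frak f_{4,2}^{k}/\frak f_{4,2}^{k+1}$ is concentrated in total degree $k$, it survives only for $k=n$, so $F(X)\in\frak f_{4,2}^{\,n}$ and, for degree reasons, is exactly an element of the degree-$n$ part of the \emph{free} Lie algebra $\frak f_{4,2}$. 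The lemma is thereby equivalent to the assertion that this `top symbol' $F(X)$ is nonzero whenever $0\neq X\in\frak i$.

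To see why the top symbol should detect $t^{12}$, I would first record the model computation in the genuinely free case. In the free Lie algebra on $\{t^{12},t^{23}_\pm,t^{24}_\pm\}$ let $\partial$ be the derivation with $\partial(t^{12})=U$ and $\partial=0$ otherwise; then the $U$-linear part of $X(t^{12}+U,\dots)-X(t^{12},\dots)$ is $\partial X$, and extracting in the free associative algebra the coefficient of a word containing a single letter $t^{24}_{\epsilon}$ shows that $\partial X=0$ forces every word of $X$ containing $t^{12}$ to vanish, i.e. $X\in\frak F_2$. In $\frak t^0_{4,2}$ the analogous role is played by the two degree-raising \emph{symbol derivations} of $\frak f_{4,2}$ induced by $\mathrm{ad}(t^{12})$ and $\mathrm{ad}(t^{23}_c)$ through the displayed formulas: $F(X)$ is assembled from $U$ by iterating these symbols along the bracketing tree of $X$, and one wants the same coefficient-extraction argument — now carried out inside the free Lie algebra $\frak f_{4,2}$ — to recover the occurrences of $t^{12}$ in $X$.

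The main obstacle is precisely this last step. Because $\frak f_{4,2}^{ab}$ is a \emph{trivial} $\frak t_{3,2}$-module, the naive first-order term vanishes for $n\geq 2$ and the information only surfaces after the degree-raising symbols have been iterated $n$ times; moreover $F$ is not a Lie homomorphism, so the top symbol is a genuinely nonlinear object in which the two sectors $t^{24}_{\pm}$ and the induced $t^{34}_{\pm}$ interleave according to the index shifts $a\mapsto a\pm c$ coming from the roots of unity. Controlling this interleaving — proving that $X\mapsto F(X)$ from the degree-$n$ part of $\frak i$ to the degree-$n$ part of $\frak f_{4,2}$ is injective — is the combinatorial heart of the matter and is exactly where the cell-decomposition bookkeeping of \cite{F2} must be upgraded to the Kummer cover $\tilde{\frak M}^2_{0,5}$. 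I expect the uniform (in $n$) nonvanishing of the top symbol to be the real work.
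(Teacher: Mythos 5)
There is a genuine gap, and you have in fact flagged it yourself: your argument reduces the lemma to the injectivity of $F:X\mapsto X^{14,2,3}-X^{1,2,3}$ on the ideal of $\frak t^0_{3,2}$ generated by $t^{12}$, and then defers this injectivity (``the real work'') to an unproved nonvanishing of a nonlinear ``top symbol'' in $\frak f_{4,2}$, to be controlled by some upgrade of the cell-decomposition combinatorics of \cite{F2}. That last expectation is misplaced: no such combinatorics is needed for this lemma (in the paper it belongs to the proof of the surrounding theorem, not of the lemma), and the filtration route you sketch through the lower central series of $\frak f_{4,2}$ drags in the generators $t^{14}$ and $t^{34}_{\pm}$ via the adjoint-action relations, even though neither $X^{1,2,3}$ nor $X^{14,2,3}$ involves them at all.

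The missing ingredient is a freeness result from \cite{E}: the Lie subalgebra of $\frak t^0_{4,2}$ generated by the five elements $t^{12}$, $t^{23}_{\pm}$, $t^{24}_{\pm}$ is \emph{free} on these generators, and, as your own explicit computation of the two superscript maps shows, both $X^{1,2,3}$ and $X^{14,2,3}$ lie in it. Granting this, your ``model computation in the genuinely free case'' is not a model but the actual situation: the hypothesis becomes the identity $X(t^{12},t^{23}_+,t^{23}_-)=X(t^{12}+t^{24}_++t^{24}_-,t^{23}_+,t^{23}_-)$ in the free Lie algebra $\frak F_5$, and applying the endomorphism (well defined by freeness) sending $t^{12}\mapsto 0$, $t^{24}_-\mapsto 0$ and fixing $t^{24}_+$, $t^{23}_{\pm}$ yields $X(0,t^{23}_+,t^{23}_-)=X(t^{24}_+,t^{23}_+,t^{23}_-)$ inside the free rank-$3$ subalgebra on $t^{24}_+$, $t^{23}_{\pm}$; hence $X$ is independent of its first argument and lies in $\frak F_2=\langle t^{23}_+,t^{23}_-\rangle$, which is exactly the paper's proof. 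So your setup (the explicit substitution $t^{12}\mapsto t^{12}+t^{24}_++t^{24}_-$ and the reduction to the ideal generated by $t^{12}$) is correct, but without invoking the freeness of the five-generator subalgebra the decisive injectivity step remains unestablished, and the semidirect-product machinery you substitute for it does not close the argument.
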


\begin{proof}
Consider the linear map $F:\frak t^0_{3,2}\to\frak t^0_{4,2}$ sending 
$h\mapsto h^{1,2,3}-h^{14,2,3}$.
Its image is contained in the Lie subalgebra of $\frak t^0_{4,2}$
generated by $t^{12}$, $t^{23}_\pm$, $t^{24}_\pm$.
According to \cite{E}, this Lie algebra is freely generated by
these 5 elements.
On the other hand, $\frak t^0_{3,2}$ can be identified with
the free Lie algebra $\frak F_3$ generated by $t^{12}$, $t^{23}_\pm$.
It follows that $\ker F$ is equal to the kernel of the map
$\tilde F:\frak F_3\to\frak F_5$ sending
$$h\mapsto h(t^{12},t^{23}_+,t^{23}_-)-h(t^{12}+t^{24}_++t^{24}_-,t^{23}_+,t^{23}_-).$$
If $X\in\ker\tilde F$, then $X(0,t^{23}_+,t^{23}_-)=X(t^{24}_+,t^{23}_+,t^{23}_-)$,
which implies,
as the Lie subalgebra of $\frak F_5$
generated by $t^{24}_+$, $t^{23}_+$ and $t^{23}_-$
is isomorphic to $\frak F_3$,
that $X$ belongs to the Lie subalgebra  $\frak F_2\subset\frak F_3$
freely generated by $t^{23}_+$ and $t^{23}_-$.
\end{proof}

The following is a geometric interpretation of our arguments above.
\begin{rem}
Put 
$\tilde{\frak{M}}^2_{0,4}:=\{z\in\bold A^1|z\neq 0,\pm 1\}$ and
$$\tilde{\frak{M}}^2_{0,5}:=\{(x,y)\in\bold A^2|xy\neq\pm 1, 
x,y\neq 0, \pm 1\}.$$
These are the the Kummer coverings of the moduli spaces\\
${\frak M}_{0,4}:=\{z\in\bold A^1|z\neq 0,1\}$
with $\tilde{\frak{M}}^2_{0,4}\to{\frak M}_{0,4}:z\mapsto z^2$
and \\
$\tilde{\frak{M}}_{0,5}:=\{(x,y)\in\bold A^2|xy\neq 1, x,y\neq 0,1\}$
with $\tilde{\frak{M}}^2_{0,5}\to{\frak M}_{0,5}:
(x,y)\mapsto (x^2,y^2)$\\
respectively.
The Lie algebras $\frak t^0_{3}$, $\frak t^0_{3,2}$, $\frak t^0_{4}$ and 
$\frak t^0_{4,2}$ are associated with the  fundamental groups
of ${\frak M}_{0,4}$, $\tilde{\frak M}^2_{0,4}$, 
${\frak{M}}_{0,5}$ and $\tilde{\frak{M}}^2_{0,5}$ respectively.
The picture of $\tilde{\frak M}^2_{0,5}$ in Figure 1
is obtained by blowing-ups of $\bold A^2$ at $(x,y)=(0,0),(\pm 1,\pm 1)$ 
and $(\infty,\infty)$.
Our $\varPi$ above corresponds to the pentagon near origin 
surrounded by $\varphi^{2,3,4}$, $\psi^{1,23,4}$, $\psi^{1,2,3}$,
$\psi^{1,2,34}$ and $\psi^{12,3,4}$.
Our $\sum_{\sigma\in \frak S_3}\epsilon(\sigma)\varPi^{1,\sigma(2),\sigma(3),\sigma(4)}$ above
corresponds to the six pentagons in the first quadrant
and
$\sum_{\sigma\in\frak S_3}\epsilon(\sigma)\varphi^{\sigma(2),\sigma(3),\sigma(4)}$
corresponds to the hexagon there.
Our $(id+s+s^2+s^3)
\sum_{\sigma\in \frak S_3}\epsilon(\sigma)\varPi^{1,\sigma(2),\sigma(3),\sigma(4)}$
stands for all the (24-)pentagons in the picture and
$(id+s+s^2+s^3)(\psi^{1,2,3}-\psi^{1,3,2}+\psi^{14,3,2}-\psi^{14,2,3})$
means the two octagons near $(0,0)$ and $(\infty,\infty)$.
\begin{figure}[h]\begin{center}\epsfig{file=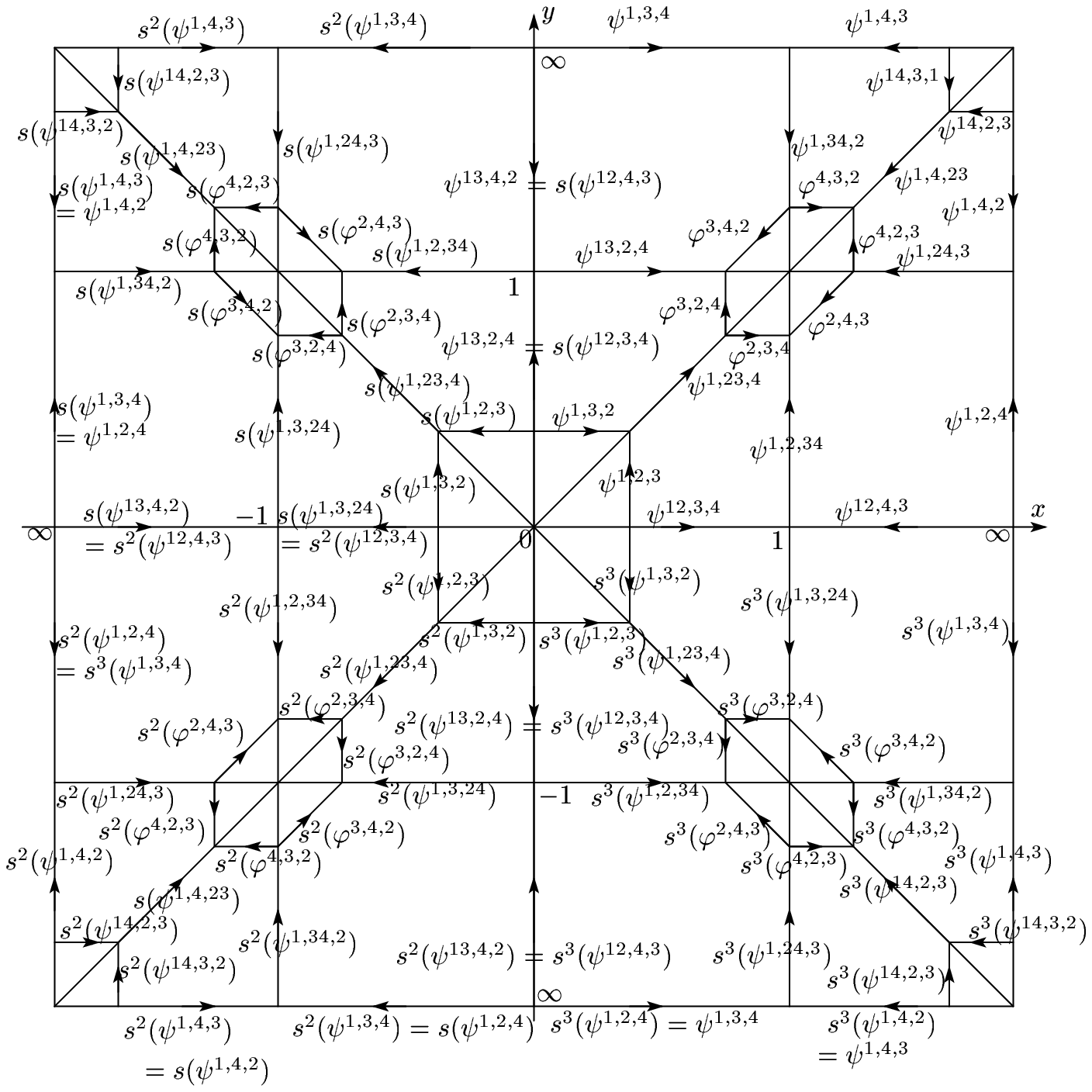, height=360pt}\caption{$\tilde{\frak{M}}^2_{0,5}$}
\end{center}\end{figure}
\end{rem}

Next we show an analogue of Theorem \ref{pentagon and octagon in Lie case}
for group-like series.

\begin{thm}\label{pentagon and octagon}
Let $(g,h)\in\exp\frak t^0_3\times\exp\frak t^0_{3,2}$ be a pair
satisfying $c_{B(0)}(h)=c_{AB(0)}(h)=0$, the mixed pentagon equation \eqref{mixed pentagon-GRTM}
and the special action condition 
\eqref{special-b-GRTM}. 
Then $h$ satisfies the octagon equation \eqref{octagon-GRTM}.
\end{thm}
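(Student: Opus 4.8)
The plan is to run the multiplicative counterpart of the proof of Theorem \ref{pentagon and octagon in Lie case}, invoking the special action condition \eqref{special-b-GRTM} where the additive cancellation of the Lie case has no group-theoretic analogue. First I would recover the hexagon and duality equations for $g$: applying $\delta_{21}$ to the mixed pentagon \eqref{mixed pentagon-GRTM} and eliminating the first strand yields $\delta_{21}(h)=g$ (here $c_{B(0)}(h)=0$ and the vanishing of $c_A$ from Lemma \ref{c=0} are used), and a second application of $\delta_{21}$ turns \eqref{mixed pentagon-GRTM} into the pentagon \eqref{pentagon-GRT} for $g$. Since $\delta_{21}$ sends $B(0)\mapsto B$ and $B(1)\mapsto 0$, the hypotheses $c_{B(0)}(h)=c_{AB(0)}(h)=0$ give $c_B(g)=c_{AB}(g)=0$, so Theorem \ref{Furusho paper}(2) shows that $g$ satisfies \eqref{hexagons-GRT}.

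Next I would set up the group-like analogue of the symmetrisation argument. Let $O\in\exp\frak t^0_{3,2}$ be the left-hand side of \eqref{octagon-GRTM}; the goal is $O=1$. Writing the trivial mixed-pentagon defect in $\exp\frak t^0_{4,2}$ and transporting it by the permutations $\sigma\in\frak S_3$ of $\{2,3,4\}$ and by the powers of the automorphism $s$ of $\frak t^0_{4,2}$ introduced in the proof of Theorem \ref{pentagon and octagon in Lie case}, I would replace the alternating sum $\sum_\sigma\epsilon(\sigma)(\cdot)$ and the operator $id+s+s^2+s^3$ by one carefully chosen noncommutative product of these images. The \emph{order} is prescribed by the cell structure of the Kummer covering $\tilde{\frak M}^2_{0,5}$: the twenty-four pentagons of Figure 1 are to be traversed so that, after using the hexagon \eqref{hexagons-GRT} for $g$ to collapse the $g$-factors around the central hexagon, all interior contributions telescope. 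Projecting by $\frak t^0_{4,2}\to\frak t^0_{3,2}$ and eliminating the fourth strand, this product should reduce to an identity of the form $O^{1,2,3}=O^{14,2,3}$, which by the Lemma preceding this statement forces $O$ into the free pro-unipotent group $\exp\frak F_2$ on $B(0),B(1)$.

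In the Lie case the corresponding identity, together with the two specialisations modulo $(t^{12})$ and $(t^{13})$ via $\frak t^0_{3,2}/(t^{12})\simeq\frak F_2\simeq\frak t^0_{3,2}/(t^{13})$, forced $\varOmega=0$ by the purely additive cancellation $2\varOmega=0$, and the special derivation condition was not needed. In the group-like setting the noncommutative product does not telescope so cleanly: reorganising it so that the $g$-factors collapse under \eqref{hexagons-GRT} and the $h$-factors cancel necessarily introduces conjugations which additivity cannot absorb. This is exactly where I would invoke the special action condition \eqref{special-b-GRTM}: rewritten through the automorphism $\bar A_h$ (namely $A\mapsto A$ and $B(a)\mapsto Ad(\tau_a h^{-1})(B(a))$), it controls precisely the conjugating factors $h(C,B(0),B(1))$ and $\tau_a h^{-1}$ occurring in $O$ and in the telescoping, and I expect it to reduce the resulting element of $\exp\frak F_2$ to the identity, giving $O=1$ and hence \eqref{octagon-GRTM}.

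The hard part will be twofold. First, the combinatorial core: determining the exact word in the $\frak S_3$- and $s$-images of the mixed pentagon whose noncommutative product telescopes — the nonabelian analogue of the signed additive cancellations of \cite{F2} and of Theorem \ref{pentagon and octagon in Lie case} — guided by the twenty-four pentagons and the two octagons near $(0,0)$ and $(\infty,\infty)$ of $\tilde{\frak M}^2_{0,5}$. Second, the bookkeeping of the conjugations produced at each cancellation and the verification that they are exactly those governed by \eqref{special-b-GRTM}, so that the special action condition is what allows one to pass from $O\in\exp\frak F_2$ to $O=1$ in the absence of the additive argument available in the Lie case.
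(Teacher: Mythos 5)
Your first paragraph matches the paper's opening step exactly: applying $\delta_{21}$ to \eqref{mixed pentagon-GRTM}, eliminating the first strand, and using Lemma \ref{c=0} gives $\delta_{21}(h)=g$ and the pentagon \eqref{pentagon-GRT}; the coefficient hypotheses then give $c_B(g)=c_{AB}(g)=0$, and Theorem \ref{Furusho paper}(2) yields $g\in\mathrm{GRT}_1(\bold k)$. After that, however, your plan has a genuine gap: the entire core of your argument --- finding a noncommutative product of $\frak S_3$- and $s$-transports of the group-like mixed pentagon relation that telescopes to $O^{1,2,3}=O^{14,2,3}$, and then using \eqref{special-b-GRTM} to kill the residual element of $\exp\frak F_2$ --- is not carried out; you explicitly defer both steps to ``the hard part.'' These are not routine verifications. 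The Lie-case proof is essentially linear: the alternating sum over $\frak S_3$, the operator $id+s+s^2+s^3$, and the final cancellation forcing $r=0$ all exploit additivity, and there is no evident nonabelian analogue --- transported copies of the group-like relation do not commute, each attempted cancellation produces conjugation errors, and no ordering of the twenty-four pentagons is known to make them telescope. This is precisely why neither \cite{F2} nor the present paper proves the group-like statement by repeating the combinatorial argument multiplicatively.

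The paper's actual proof sidesteps the problem by induction on degree: assuming $(g,h)\pmod{\deg n}$ lies in $\mathrm{GRTM}_{(\bar 1,1)}(2,\bold k)^{(n)}$, it lifts this truncation to a genuine element $(g_1,h_1)\in\mathrm{GRTM}_{(\bar 1,1)}(2,\bold k)$ (Lemma \ref{surjection}, resting on the group structure from Proposition \ref{GRTM forms a group}), writes $(g,h)=(g_0,h_0)\circ(g_1,h_1)$, and observes that the degree-$n$ part $(\varphi,\psi)$ of $(g_0,h_0)$ satisfies the Lie equations \eqref{mixed pentagon} and \eqref{special-b}; Theorem \ref{pentagon and octagon in Lie case} then puts $(\varphi,\psi)$ in $\frak{grtm}_{(\bar 1,1)}(2,\bold k)$, and exponentiating closes the induction at degree $n+1$. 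Note also that the role of \eqref{special-b-GRTM} there differs from the one you assign it: it is not used to control conjugations in a telescoping product, but to guarantee that the degree-$n$ discrepancy satisfies the special derivation condition \eqref{special-b} and hence lies in $\frak{grtm}_{(\bar 1,1)}(2,\bold k)$ (of which \eqref{special-b} is a defining equation), so that its exponential lies in the group and the inductive step can be completed. If you want to salvage your outline, the realistic route is exactly this degree-by-degree reduction to the Lie case rather than a direct multiplicative telescoping.
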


\begin{proof}
By taking the image of \eqref{mixed pentagon-GRTM} by $\delta_{21}$
and eliminating the first strand,
we get $\delta_{21}(h)=g$ because of Lemma \ref{c=0}
and then the pentagon equation \eqref{pentagon-GRT} for $g$.
By $c_{B(0)}(h)=0$ and \eqref{pentagon-GRT},
the linear terms of $g$ are all zero.
Hence by $c_{AB(0)}(h)=0$, its quadratic terms are all zero.
By Theorem \ref{Furusho paper}.(2), $g\in \mathrm{GRT}_1(\bold k)$.
So it suffices to prove $(g,h)\in \mathrm{GRTM}_{(\bar 1,1)}(2,\bold k)$.
The proof can be done by induction on degree.
Suppose that we have \eqref{octagon-GRTM} for $(g,h)$ modulo degree $n$,
which we denote as 
$$(g,h)\pmod{\deg n}\in \mathrm{GRTM}_{(\bar 1,1)}(2,\bold k)^{(n)}.$$
Then there is a pair 
$$(g_1,h_1)\in \mathrm{GRTM}_{(\bar 1,1)}(2,\bold k) \ 
\text{with} \  (g,h)\equiv(g_1,h_1)\pmod{\deg n}$$ 
by Lemma \ref{surjection}.
Let $(g_0,h_0)$ be the pair defined by 
$$(g,h)=(g_0,h_0)\circ(g_1,h_1).$$
Then the pair $(g_0,h_0)$ lies in $\exp\frak t^0_3\times\exp\frak t^0_{3,2}$
and satisfies $c_{B(0)}(h_0)=c_{AB(0)}(h_0)=0$,
\eqref{mixed pentagon-GRTM} and \eqref{special-b-GRTM}
by $(g_1,h_1)\in \mathrm{GRTM}_{(\bar 1,1)}(2,\bold k)$.
By $(g,h)\equiv(g_1,h_1)\pmod{\deg n}$, we have $(g_0,h_0)\equiv (1,1)\pmod{\deg n}$.
Denote the degree $n$-part of the pair $(g_0,h_0)$ by $(\varphi,\psi)$.
The pair $(\varphi,\psi)$ lies in $\frak t^0_3\times\frak t^0_{3,2}$
and satisfies \eqref{mixed pentagon} and \eqref{special-b}
by \eqref{mixed pentagon-GRTM} and \eqref{special-b-GRTM} for 
$(g,h)$ and $(g_0,h_0)$,
which is obtained by comparing the lowest differing terms of the equations.
Then by Theorem \ref{pentagon and octagon in Lie case},
we have 
$$(\varphi,\psi)\in\frak{grtm}_{(\bar 1,1)}(2,\bold k).$$
Let $(g'_0,h'_0)$ be the element in $\mathrm{GRTM}_{(\bar 1,1)}(2,\bold k)$
which corresponds to $(\varphi,\psi)\in\frak{grtm}_{(\bar 1,1)}(2,\bold k)$
by the exponential map.
Since $(g_0,h_0)\equiv(g'_0,h'_0)\pmod{\deg n+1}$,
$$(g_0,h_0)\pmod{\deg n+1}\in \mathrm{GRTM}_{(\bar 1,1)}(2,\bold k)^{(n+1)}.$$
Therefore $(g,h)\pmod{\deg n+1}\in \mathrm{GRTM}_{(\bar 1,1)}(2,\bold k)^{(n+1)}$.
\end{proof}

\begin{rem}
We note that
in 
Theorem \ref{pentagon and octagon in Lie case}
we do not assume  the special condition \eqref{special-b},
on the other hand, in 
Theorem \ref{pentagon and octagon}
we assume the special condition \eqref{special-b-GRTM}.
The analogue of Theorem \ref{Furusho paper} (3)
might be the implication of \eqref{octagon-Pseudo}
from \eqref{mixed pentagon}
but we do not know whether this implication holds.
\end{rem}

\section{Broadhurst duality}
\label{Broadhurst duality}
We will show that the Broadhurst duality relation is compatible with
the torsor structure of $\mathrm{Pseudo}_{(\bar 1,1)}(2,\bold k)$.

Let $\tau$ be the involution of $\frak t^0_{3,2}$
defined by $\tau:A\leftrightarrow B(0)$ and $B(1)\leftrightarrow C$.

\begin{defn}
(1).
The set $\frak{grtmb}_{(\bar 1,1)}(2,\bold k)$
is defined as the set of all $\psi\in\frak{grtm}_{(\bar 1,1)}(2,\bold k)$
such that the {\it Broadhurst duality relation}
\begin{equation}\label{duality}
\tau(\psi)+\psi+\alpha_\psi(A+B(0))=0
\end{equation}
holds for some $\alpha_\psi\in \bold k$.

(2).
The set $\mathrm{GRTMB}_{(\bar 1,1)}(2,\bold k)$
is defined as the set of all $(g,h)\in\mathrm{GRTM}_{(\bar 1,1)}(2,\bold k)$
such that the {\it Broadhurst duality relation}
\begin{equation}\label{duality-GRTM}
\tau(h)e^{\alpha_h B(0)}he^{\alpha_h A}=1
\end{equation}
holds for some $\alpha_h\in \bold k$.

(3).
The set $\mathrm{PseudoB}_{(\bar 1,1)}(2,\bold k)$
is defined as the set of all
$(g,h)\in\mathrm{Pseudo}_{(\bar 1,1)}(2,\bold k)$
such that the {\it Broadhurst duality relation} \eqref{duality-GRTM}
holds for some $\alpha_h\in \bold k$.
\end{defn}

Actually $\alpha_\psi$ and $\alpha_h$ are equal to the coefficients of $B(1)$ in $\psi$ and $h$ respectively.

\begin{thm}\label{Broadhurst torsor}
(1).
The set $\frak{grtmb}_{(\bar 1,1)}(2,\bold k)$
forms a Lie algebra by the Lie bracket \eqref{Lie bracket}.

(2).
The set $\mathrm{GRTMB}_{(\bar 1,1)}(2,\bold k)$
forms an algebraic group by the multiplication \eqref{multiplication-GRT}
and its associated Lie algebra is  $\frak{grtmb}_{(\bar 1,1)}(2,\bold k)$.

(3).
The set $\mathrm{PseudoB}_{(\bar 1,1)}(2,\bold k)$ forms a right
$\mathrm{GRTMB}_{(\bar 1,1)}(2,\bold k)$-torsor by \eqref{multiplication-GRT}.
\end{thm}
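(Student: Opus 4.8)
The plan is to recast the Broadhurst duality as an involutive symmetry and then invoke the elementary fact that the set of elements sent to their own inverse by an involutive anti-automorphism is a subgroup. Writing $\alpha_h:=c_{B(1)}(h)$ and
\[
\Theta(h):=e^{\alpha_h A}\,\tau(h)\,e^{\alpha_h B(0)}\qquad (h\in\exp\frak t^0_{3,2}),
\]
a one-line rearrangement of \eqref{duality-GRTM} shows that the Broadhurst relation is equivalent to $\Theta(h)=h^{-1}$; at the Lie level the linear map $\theta(\psi):=\tau(\psi)+c_{B(1)}(\psi)(A+B(0))$ turns \eqref{duality} into $\theta(\psi)=-\psi$ (that $\alpha_\psi$ and $\alpha_h$ are indeed the coefficients of $B(1)$, as recorded after the definition, follows from the vanishing of $c_A$ and $c_{B(0)}$ on $\frak{grtm}_{(\bar 1,1)}(2,\bold k)$). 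Since the $h$-component of the product \eqref{multiplication-GRTM} equals $h_1*h_2:=h_2\cdot\bar A_{h_2}(h_1)$ and involves neither $g_1$ nor $g_2$, all three parts reduce to statements about this $h$-law, the $g$-components remaining in $\mathrm{GRT}_1(\bold k)$ by Drinfeld's theory; on the full pairs one simply sets $\Theta(g,h):=(g^{-1},\Theta(h))$, which is again an anti-automorphism, the $g$-slot contributing via ordinary inversion.

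The central computation, which I expect to be the main obstacle, is to prove that $\Theta$ is a well-defined involutive anti-automorphism of the $h$-part of $\mathrm{GRTM}_{(\bar 1,1)}(2,\bold k)$. Well-definedness means verifying that $\tau$, corrected by the central factors $e^{\alpha A}$ and $e^{\alpha B(0)}$, carries the mixed pentagon \eqref{mixed pentagon-GRTM}, octagon \eqref{octagon-GRTM} and special action \eqref{special-b-GRTM} equations into themselves and preserves $c_{B(0)}=0$; this is the group-theoretic shadow of the exceptional symmetry $\tau$ of $\bold P^1\backslash\{0,\infty,\mu_2\}$. The anti-homomorphism identity $\Theta(h_1*h_2)=\Theta(h_2)*\Theta(h_1)$ is the heart of the matter: expanding $\tau(h_2\cdot\bar A_{h_2}(h_1))=\tau(h_2)\cdot(\tau\circ\bar A_{h_2}\circ\tau^{-1})(\tau(h_1))$, I must identify the conjugated automorphism $\tau\circ\bar A_{h}\circ\tau^{-1}$ in terms of $\bar A$ applied to a $\tau$-transform of $h$, and then check that the scalar corrections recombine. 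That last step rests on the additivity $c_{B(1)}(h_1*h_2)=c_{B(1)}(h_1)+c_{B(1)}(h_2)$, read off in degree one. The difficulty is precisely that $\tau$ commutes with neither the rotation $\tau_1$ nor the adjoint twists appearing in $\bar A_h$, so the identity for $\tau\circ\bar A_h\circ\tau^{-1}$ has to be extracted by a direct, if delicate, manipulation.

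Granting this, part (2) is immediate from the abstract lemma: for an involutive anti-automorphism $\Theta$ the locus $\{x:\Theta(x)=x^{-1}\}$ is closed under multiplication, since $\Theta(xy)=\Theta(y)\Theta(x)=y^{-1}x^{-1}=(xy)^{-1}$, as well as under inversion and containing the unit, so $\mathrm{GRTMB}_{(\bar 1,1)}(2,\bold k)$ is an algebraic subgroup. Part (1) is the infinitesimal counterpart, which I would in fact establish first as a self-contained model: using the analogous conjugation identity for $\tau\circ\bar D_\psi\circ\tau^{-1}$ one shows $\theta$ is an involution with $\theta\langle\psi_1,\psi_2\rangle=-\langle\theta\psi_1,\theta\psi_2\rangle$ for the bracket \eqref{Lie bracket}, whence the anti-fixed locus $\{\theta(\psi)=-\psi\}=\frak{grtmb}_{(\bar 1,1)}(2,\bold k)$ is a subalgebra; identifying it as the Lie algebra of $\mathrm{GRTMB}_{(\bar 1,1)}(2,\bold k)$ follows by matching \eqref{duality} with the linear term of $\Theta(h)=h^{-1}$ along the exponential.

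For part (3) the same identity is applied with one factor ranging over the $h$-parts of $\mathrm{Pseudo}_{(\bar 1,1)}(2,\bold k)$ and the other over the group. I first check that $\Theta$ sends the variant octagon \eqref{octagon-Pseudo} to itself, where the scalar corrections must now be reconciled with the extra factors $\exp\{A/2\}$, $\exp\{B(0)/2\}$, $\exp\{B(1)/2\}$, $\exp\{C/2\}$, the analogue of the $\mathrm{M}_1(\bold k)$ hexagon bookkeeping. Stability, i.e. $p\circ\gamma\in\mathrm{PseudoB}_{(\bar 1,1)}(2,\bold k)$ for $p\in\mathrm{PseudoB}_{(\bar 1,1)}(2,\bold k)$ and $\gamma\in\mathrm{GRTMB}_{(\bar 1,1)}(2,\bold k)$, follows from $\Theta(h_p*h_\gamma)=\Theta(h_\gamma)*\Theta(h_p)=h_\gamma^{-1}*h_p^{-1}=(h_p*h_\gamma)^{-1}$; and for two points $p_1,p_2\in\mathrm{PseudoB}_{(\bar 1,1)}(2,\bold k)$ the unique $\gamma\in\mathrm{GRTM}_{(\bar 1,1)}(2,\bold k)$ with $p_1\circ\gamma=p_2$, which exists because $\mathrm{Pseudo}_{(\bar 1,1)}(2,\bold k)$ is already a torsor, is forced by the same identity to satisfy $\Theta(h_\gamma)=h_\gamma^{-1}$, hence $\gamma\in\mathrm{GRTMB}_{(\bar 1,1)}(2,\bold k)$. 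Non-emptiness, needed for a genuine torsor, is provided by the cyclotomic associator $\varphi_{KZ}^2$, which realises Broadhurst's original relations. This exhibits $\mathrm{PseudoB}_{(\bar 1,1)}(2,\bold k)$ as a right $\mathrm{GRTMB}_{(\bar 1,1)}(2,\bold k)$-subtorsor and completes the argument.
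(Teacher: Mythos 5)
Your reduction contains a genuine inconsistency, and it sits exactly at the point you yourself flag as ``the heart of the matter.'' Your one-line rearrangement of \eqref{duality-GRTM} gives $\Theta(h)=h^{-1}$ where $h^{-1}$ is the \emph{ordinary} inverse in the group $\exp\frak t^0_{3,2}$ (power-series inverse). But the elementary lemma you want to invoke --- that the anti-fixed locus of an involutive anti-automorphism is a subgroup --- requires $\Theta(h)$ to equal the inverse of $h$ for the group law \eqref{multiplication-GRTM}, and that inverse is $\bar A_h^{-1}(h^{-1})$, not $h^{-1}$; the two differ by the automorphism $\bar A_h$. Concretely, your computation $\Theta(x*y)=\Theta(y)*\Theta(x)=y^{-1}*x^{-1}=(x*y)^{-1}$ silently uses the same multiplication and the same inversion throughout, whereas in fact $y^{-1}*x^{-1}=x^{-1}\cdot\bar A_{x^{-1}}(y^{-1})$ while the ordinary inverse of $x*y=y\cdot\bar A_y(x)$ is $\bar A_y(x^{-1})\cdot y^{-1}$; these are not equal without a further argument, and the discrepancy is precisely ``conjugation by an inner automorphism.'' The same problem infects the $g$-slot: $\mathrm{GRT}_1(\bold k)$ is a group for \eqref{multiplication-GRT}, not for power-series multiplication, so $\Theta(g,h)=(g^{-1},\Theta(h))$ with the ordinary inverse is not a self-map of the relevant group in any evident way. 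Since the anti-automorphism property of $\Theta$ is also the one step you defer (``a direct, if delicate, manipulation''), the proposal as written is a plan whose pivotal claim is both unproven and, in its stated form, not correct.

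The paper's proof shows how to repair exactly this defect: instead of working with the series $h$, pass to the outer (automorphism) groups, $\mathrm{OutDer}(\frak t^0_{3,2})=(\mathrm{Der}/\mathrm{Int})(\frak t^0_{3,2})$ at the Lie level and $\mathrm{Out}(\frak t^0_{3,2})=(\mathrm{Aut}/\mathrm{Inn})(\frak t^0_{3,2})$ at the group level. There conjugation by $\tau$ induces an honest involutive \emph{automorphism} (no ``anti'' needed), its fixed part $\mathrm{Out}^+$ is a subgroup, and \eqref{duality} resp.\ \eqref{duality-GRTM} --- with the floating scalar factors $e^{\alpha_h A}$, $e^{\alpha_h B(0)}$ --- is exactly the condition that the class of $\bar D_\psi$ resp.\ $\bar A_h$ be $\tau$-invariant modulo inner ones: both the scalar corrections and the ordinary-versus-twisted inverse mismatch are absorbed into the inner quotient. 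Parts (1) and (2) then follow because $\frak{grtmb}$ and $\mathrm{GRTMB}$ are intersections (preimages of $\mathrm{Der}\times\mathrm{OutDer}^+$, resp.\ $\mathrm{Aut}\times\mathrm{Out}^+$, under the embeddings), with no involution of $\mathrm{GRTM}$ itself ever constructed. For part (3) the paper proves stability by the direct computation $\tau(h_3)e^{(\alpha_{h_1}+\alpha_{h_2})B(0)}h_3e^{(\alpha_{h_1}+\alpha_{h_2})A}=1$ and proves transitivity not by cancellation (which would need your framework) but by a degree-by-degree approximation: if two points of $\mathrm{PseudoB}_{(\bar 1,1)}(2,\bold k)$ agree modulo degree $n-1$, the degree-$n$ discrepancy is $\tau$-anti-invariant, hence exponentiates to an element of $\mathrm{GRTMB}_{(\bar 1,1)}(2,\bold k)$, and one iterates. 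If you want to keep your formalism, you would have to replace $\Theta$ by its image in $\mathrm{Out}(\frak t^0_{3,2})$ --- at which point you recover the paper's argument.
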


\begin{proof}
(1).
Put $\mathrm{Out Der}(\frak t^0_{3,2})=\mathrm{(Der/Int)}(\frak t^0_{3,2})$.
This quotient forms a Lie algebra
with the involution induced by $\tau$.
Its invariant part  $\mathrm{Out Der}^+(\frak t^0_{3,2})$ again forms a Lie algebra.
The embedding sending $(\varphi,\psi)\mapsto (D_\varphi,\bar{D}_\psi)$
induces the embedding
$\frak{grtm}_{(\bar 1,1)}(2,\bold k)\hookrightarrow
\mathrm{Der}(\frak t^0_{3})\times \mathrm{OutDer}(\frak t^0_{3,2})$.
It can be checked that \eqref{duality} is
the condition for $(\varphi,\psi)$ to belong to
the intersection of two Lie algebras
$\frak{grtm}_{(\bar 1,1)}(2,\bold k)$
and $\mathrm{Der}(\frak t^0_{3})\times \mathrm{OutDer}^+(\frak t^0_{3,2})$.

(2).
It can be proved similarly.
Put $\mathrm{Out} \frak t^0_{3,2}=\mathrm{(Aut/Inn)}(\frak t^0_{3,2})$,
the outer automorphism group of $\frak t^0_{3,2}$;
the group of automorphisms modulo inner automorphisms.
This quotient forms a group
with the involution induced from $\tau$.
Its invariant part $\mathrm{Out}^+\frak t^0_{3,2}$ again forms a group.
The embedding sending $(g,h)\mapsto (A_g,\bar{A}_h)$
induces the embedding
$\mathrm{GRTM}_{(\bar 1,1)}(2,\bold k)\hookrightarrow
\mathrm{Aut}(\frak t^0_{3})\times \mathrm{Out}(\frak t^0_{3,2})$.
It can be checked that \eqref{duality-GRTM} is
the condition for $(g,h)$ to belong to
the intersection of two group
$\mathrm{GRTM}_{(\bar 1,1)}(2,\bold k)$
and 
$\mathrm{Aut}(\frak t^0_{3})\times \mathrm{Out}^+(\frak t^0_{3,2})$.
Since $\frak{grtm}_{(\bar 1,1)}(2,\bold k)$
and $\mathrm{Der}(\frak t^0_{3})\times \mathrm{OutDer}^+(\frak t^0_{3,2})$ are
the associated Lie algebras with these two groups,
$\frak{grtmb}_{(\bar 1,1)}(2,\bold k)$ is associated with
$\mathrm{GRTMB}_{(\bar 1,1)}(2,\bold k)$.

(3).
By direct calculation, it can be shown that
$$\tau(h_3)e^{(\alpha_{h_1}+\alpha_{h_2})B(0)}h_3
e^{(\alpha_{h_1}+\alpha_{h_2})A}=1$$
for $(g_3,h_3)=(g_1,h_1)\circ (g_2,h_2)$ with
$(g_1,h_1)\in \mathrm{PseudoB}_{(\bar 1,1)}(2,\bold k)$
and
$(g_2,h_2)\in \mathrm{GRTMB}_{(\bar 1,1)}(2,\bold k)$,
which shows that $\mathrm{PseudoB}_{(\bar 1,1)}(2,\bold k)$
is a $\mathrm{GRTMB}_{(\bar 1,1)}(2,\bold k)$-space.
To prove that it forms a torsor, it suffices to show that the action is transitive.
Assume that $(g_1,h_1)$ and $(g_3,h_3)$ belong to 
$\mathrm{PseudoB}_{(\bar 1,1)}(2,\bold k)$
and they are equal $\mod\deg n-1$.
Then the degree $n$-part $\psi$ of their difference satisfies $\tau(\psi)+\psi=0$.
So it gives an element $(\varphi,\psi)\in\frak{grtmb}_{(\bar 1,1)}(2,\bold k)$.
Put 
$$(g^{(n)}_2,h^{(n)}_2):=Exp(\varphi,\psi)\in\mathrm{GRTMB}_{(\bar 1,1)}(2,\bold k).$$
Let $(g_2,h_2)\in \mathrm{GRTM}_{(\bar 1,1)}(2,\bold k)$
be the element uniquely determined by $(g_3,h_3)=(g_1,h_1)\circ (g_2,h_2)$.
Then $(g_2,h_2)\equiv(g^{(n)}_2,h^{(n)}_2)\mod\deg n$.
By approximation methods replacing $(g_1,h_1)$ by $(g_1,h_1)\circ(g^{(n)}_2,h^{(n)}_2)$,
we can show $(g_2,h_2)$ belongs to
$\mathrm{GRTMB}_{(\bar 1,1)}(2,\bold k)$.
\end{proof}

We note that by Corollary \ref{grtmd=grtm}
$$\frak{grtmdb}_{(\bar 1,1)}(2,\bold k)=\frak{grtmb}_{(\bar 1,1)}(2,\bold k),$$ 
$$\mathrm{GRTMDB}_{(\bar 1,1)}(2,\bold k)=\mathrm{GRTMB}_{(\bar 1,1)}(2,\bold k),$$
$$\mathrm{PsdistB}_{(\bar 1,1)}(2,\bold k)=\mathrm{PseudoB}_{(\bar 1,1)}(2,\bold k).$$

\begin{rem}
(i).
The equation \eqref{duality-GRTM} holds for  $h=\varPhi^{N}_{KZ}$ with 
$\alpha=\log 2$ and $N=2$ (cf. \cite{O} \S 4).
Here $\varPhi^{N}_{KZ}$ means an $N$-cyclotomic analogue of the Drinfeld associator,
whose all coefficients are multiple $L$-values (see also \cite{E}).
It is explained in \cite{O} that
the equation yields the Broadhurst duality relation \cite{B} (127)
of multiple $L$-values with signature $\{\pm\}$.

(ii).
In \cite{LNS}, a subgroup  $\mathrm{I}\!\Gamma$ of
the pro-finite Grothendieck-Teichm\"{u}ller group $\widehat{GT}$
is introduced, with the properties of both containing 
the absolute Galois group $Gal(\overline{\bf Q}/{\bf Q})$
of the rational number field $\bf Q$
and of acting on the pro-finite completion of all the mapping class
groups.
One of its main defining conditions is Equation (IV) from \cite{LNS},
an equivalent form of which was found in \cite{F1} Equation (3).
The latter equation is a pro-finite analogue of \eqref{duality-GRTM}.
\end{rem}

Since \eqref{duality} is geometric,
$\frak{grtmdb}_{(\bar 1,1)}(2,\bold k)$ contains the free Lie algebra
$\mathrm{LieGal}^{M}(\bold Z[\frac{1}{2}])$
with one free generator in each degree $1$, $3$, $5$, $7$,\dots.
It is fundamental to ask if they are equal or not. Namely
\begin{q}
Is the Lie algebra $\frak{grtmdb}_{(\bar 1,1)}(2,\bold k)$
free with one generator in each degree $1$, $3$, $5$, $7$,\dots ?
\end{q}

\appendix
\section{Infinitesimal module categories}\label{Infinitesimal module categories}
In this appendix basics of infinitesimal module categories
are given.
We prove the fact implicitly employed in \cite{E} that
$\mathrm{GRTM}_{(\bar 1,1)}(N,\bf k)$ forms a group
by using its action on infinitesimal module categories.

\subsection{Infinitesimal braided monoidal categories}
An {\it infinitesimal braided monoidal category} (IBMC for short)
is a set 
$$\mathbb C=(\mathcal C,\otimes,I,a,c,l,r,U,t)$$
consisting of a category $\mathcal C$, 
a bi-functor $\otimes:\mathcal C^2\to\mathcal C$,
$I\in\text{Ob}\mathcal C$,
functorial assignments
$a_{XYZ}\in \mathrm{Isom}_{\mathcal C}(X\otimes (Y\otimes Z),(X\otimes Y) \otimes Z)$ and
$c_{XY}\in \mathrm{Isom}_{\mathcal C}(X\otimes Y, Y\otimes X)$,
$l_X\in \mathrm{Isom}_{\mathcal C}(I\otimes X,X)$,
$r_X\in \mathrm{Isom}_{\mathcal C}(X\otimes I,X)$,
a normal subgroup $U_X$ of $\mathrm{Aut}_{\mathcal C}X$
and $t_{XY}\in \mathrm{Lie} U_{X\otimes Y}$
for all $X,Y,Z\in\text{Ob}\mathcal C$
which satisfies the following:

(i). It forms a {\it braided monoidal category} \cite{M}
(quasi-tensor category \cite{Dr}):
{\it the triangle, the pentagon and
the hexagon axioms} hold for $a,c,l,r$ and $I$
(cf. \cite{Dr} (1.7)-(1.9b)).

(ii). The group $U_X$ is a pro-unipotent $\bf k$-algebraic group
and $fU_Xf^{-1}=U_Y$ for any $f\in \mathrm{Isom}_{\mathcal C}(X,Y)$ holds
and any $X,Y\in \mathrm{Ob}\mathcal C$.

(iii). The map $t_{XY}$ is functorial on $X$ and $Y$ and satisfies
$$t_{X\otimes Y, Z}=
a_{XYZ}(id_X\otimes t_{YZ})a_{XYZ}^{-1}+
(c_{YX}\otimes id_Z)a_{YXZ}(id_Y\otimes t_{XZ})
a_{YXZ}^{-1}(c_{YX}\otimes id_Z)^{-1}$$
and
$c_{XY}t_{XY}=t_{YX}c_{XY}$.

For a group $G$ and a (braided) monoidal category $\mathcal C$,
{\it an action of $G$ on $\mathcal C$} is a collection of morphisms
$G\to \mathrm{Aut}_{\mathcal C}X:g\mapsto g_X$
for $X\in \mathrm{Ob}\mathcal C$ such that
$fg_X=g_Yf$ for any $f\in \mathrm{Isom}_{\mathcal C}(X,Y)$ and
$g_{X\otimes Y}=g_X\otimes g_Y$ for any $g\in G$ and $X,Y\in \mathrm{Ob}\mathcal C$.
Let $C_N$ be the cyclic group $\bold Z/N$ of order $N\in\bold N$ with a generator $\sigma$.
We call an IBMC with $C_N$-action a $C_N$-IBMC.

For $n\geqslant 1$, let $\frak u_{n,N}$ denote be the completed $\bf k$-Lie algebra
with generators 
$$t(a)^{ij} \quad  (i\neq j,1\leqslant i,j\leqslant n,
a\in C_N )$$ 
and relations 
$$t(a)^{ij}=t(-a)^{ji},
[t(a)^{ij},t(a+b)^{ik}+t(b)^{jk}]=0 \text{ and }
[t(a)^{ij},t(b)^{kl}]=0$$
for all $a,b\in C_N$ and all distinct $i,j,k,l$
($1\leqslant i,j,k,l\leqslant n$).
Denote by $G_{n,N}$ the semi-direct product of
$C_N^n$ by the symmetric group $S_n$.
This group acts on ${\frak u}_{n,N}$ by
$$(c_1,\cdots,c_n)\cdot t(a)^{ij}=t(a+c_i-c_j) \text{ and }
s\cdot t(a)^{ij}=t(a)^{s(i),s(j)}$$ 
for
$1\leqslant i,j\leqslant n$,
$a,c_1\dots,c_n\in C_N$ and $s\in S_n$.
Put 
$\mathcal U_{n,N}=\exp{\frak u}_{n,N}$ and
$\mathcal G_{n,N}=\mathcal U_{n,N}\rtimes G_{n,N}$.
Let $\mathbb C_{\text{univ}}$
be the category defined by
$$\text{Ob}\mathbb C_{\text{univ}}=\coprod_{n\geqslant 0}\{\text{parenthesizations of the word }
\underbrace{\bullet\cdots\bullet}_{n}\}$$
and for $X,X'\in\text{Ob}\mathbb C_{\text{univ}}$,
$$\mathrm{Mor}_{\mathbb C_{\text{univ}}}(X,X')=
\begin{cases}
\mathcal G_{n,N} &
\text{if their lengths $|X|$ and $|X'|$ are equal to $n$},\\
\emptyset 
&\text{if their lengths are different.}
\end{cases}
$$
Let $\otimes:(\text{Ob}\mathbb C_{\text{univ}})^2\to
\text{Ob}\mathbb C_{\text{univ}}$ be the map induced by the concatenation
and $\mathcal G_{m,N}\times\mathcal G_{n,N}\to\mathcal G_{m+n,N}$
be the homomorphism induced by the juxtaposition.
They yield a morphism
$\mathrm{Mor}(X,X')\times \mathrm{Mor}(Y,Y')\to \mathrm{Mor}(X\otimes Y,X'\otimes Y')$.
Put $a_{XYZ}:=1\in\mathcal G_{|X|+|Y|+|Z|,N}$ and
$$c_{XY}:=\sigma_{|X|,|Y|}\in S_{|X|+|Y|}\subset G_{|X|+|Y|,N}\subset \mathcal G_{|X|+|Y|,N}$$
where $\sigma_{|X|,|Y|}$ means the permutation interchanging $X$ and $Y$.
Set  $I=\emptyset \ \text{(:the empty word)}\in\text{Ob}\mathbb C_{\text{univ}}$,
$l_X$ and $r_X$ to be the identity maps. 
Finally we put $U_X=\mathcal U_{m,N}\in \mathrm{Aut}_{\mathbb C_{\text{univ}}}(X)$
and 
$$t_{XY}:=\sum_{1\leqslant i\leqslant m,1\leqslant j\leqslant n,a\in C_N}
t(a)^{i,m+j}\in {\frak u}_{m+n,N}$$
for $|X|=m$ and $|Y|=n$.
Then $\mathbb C_{\text{univ}}$ forms a $C_N$-IBMC, which is universal in the following sense:
if $\mathbb C$ is a $C_N$-IBMC with a distinguished object $X$,
then there exists a unique functor $\mathbb C_{\text{univ}}\to\mathbb C$
of $C_N$-IBMC's which sends $\bullet$ to $X$.

\subsection{Infinitesimal module categories over $C_N$-braided monoidal categories}
Let $\mathbb C=(\mathcal C,\otimes,I,a,c,l,r,U,t,\sigma)$ be a $C_N$-IBMC.
An {\it infinitesimal (right) module category} (IMC for short) over $\mathbb C$
is a set 
$$\mathbb M=(\mathcal M,\otimes,b,r,V,t)$$
consisting of a category $\mathcal M$, 
a bi-functor $\otimes:\mathcal M\otimes\mathcal C\to\mathcal M$,
functorial assignments
$b_{MXY}\in \mathrm{Isom}_{\mathcal M}(M\otimes (X\otimes Y),(M\otimes X) \otimes Y)$ and
$r_M\in \mathrm{Isom}_{\mathcal M}(M\otimes I, M)$,
a normal subgroup $V_M$ of $\mathrm{Aut}_{\mathcal M}M$
and $t_{MX}\in \mathrm{Lie} V_{M\otimes X}$
for all $M\in\text{Ob}\mathcal M$ and $X,Y\in\text{Ob}\mathcal C$ which satisfies the following:

(I). It forms a {\it right module category} over $(\mathcal C,\otimes,I,a,c,l,r)$:
the {\it mixed pentagon axiom}
$$(b_{MXY}\otimes id_Z)b_{M,X\otimes Y,Z}=
b_{M\otimes X,Y,Z}b_{M,X,Y\otimes Z}(id_M\otimes a_{XYZ})$$
and the {\it triangle axioms}
$$r_{M\otimes X}b_{MXI}=id_M\otimes r_X \quad \text{and} \quad
(r_M\otimes id_X)b_{MIX}=id_M\otimes l_X$$ hold
for all $M\in\text{Ob}\mathcal M$ and $X,Y,Z\in\text{Ob}\mathcal C$.

(II). The {\it octagon axiom}
$$id_{M\otimes X}\otimes\sigma_Y=b_{MXY}(id_M\otimes c_{Y\otimes X})
b_{MYX}^{-1}((id_M\otimes \sigma_Y)\otimes id_X)
b_{MYX}^{-1}\cdot(id_M\otimes c_{XY})b_{MXY}$$
holds
for all $M\in\text{Ob}\mathcal M$ and $X,Y\in\text{Ob}\mathcal C$.

(III). The group $V_M$ is a pro-unipotent $\bf k$-algebraic group
and $fV_Mf^{-1}=V_{M'}$ holds for any
$f\in \mathrm{Isom}_{\mathcal M}(M,M')$ and any $M,M'\in \mathrm{Ob}\mathcal M$.

(IV). The map $t_{MX}$ is functorial on $M$ and $X$ and satisfies
\begin{align*}
t_{M\otimes X,Y}=b_{MXY}&(id_M\otimes c_{YX})b_{MYX}^{-1}\cdot
(t_{MY}\otimes id_X)b_{MYX}(id_M\otimes c_{XY})b_{MXY}^{-1} \\
&+\sum_{a\in C_N}(id_{M\otimes X}\otimes\sigma_Y^a)\cdot
b_{MXY}(id_M\otimes t_{XY})b_{MXY}^{-1}\cdot
(id_{M\otimes X}\otimes\sigma_Y^{-a})
\end{align*}
and
$$t_{M\otimes X,Y}+t_{MX}\otimes id_Y=b_{MXY}t_{M,X\otimes Y}b_{MXY}^{-1}.$$

We can formulate the notion of functors between 
two IMC's over $C_N$-IBMC's.
We note that such category
forms a braided module category in the sense of \cite{E}.

A natural morphism $\frak u_{n,N}\to\frak t_{n+1,N}$
is obtained by shifting indices by 1.
By the morphism we extend the $G_{n,N}$-action on $\frak u_{n,N}$
into on $\frak t_{n+1,N}$ via
$$(c_1,\cdots,c_n)\cdot t^{1,i+1}=t^{1,i+1} \ \text{ and } \ 
\sigma(t^{1,i+1})=t^{1,\sigma(i)+1}$$
for $c_1,\cdots,c_n\in C_N$, $\sigma\in S_n$ and $1\leqslant i\leqslant n$.
Put $\tilde{\mathcal U}_{n+1,N}:=\exp\frak t_{n+1,N}$ and
$\tilde{\mathcal G}_{n+1,N}:=\tilde{\mathcal U}_{n+1,N}\rtimes G_{n,N}$.
We now construct an IMC $\mathbb M_{\text{univ}}$ over
the $C_N$-IBMC $\mathbb C_{\text{univ}}$.
Set 
$$\text{Ob}\mathbb M_{\text{univ}}=\text{Ob}\mathbb C_{\text{univ}}$$
and for $M,M'\in\text{Ob}\mathbb M_{\text{univ}}$,
$$
\mathrm{Mor}_{\mathbb M_{\text{univ}}}(M,M')=
\begin{cases}
\tilde{\mathcal G}_{n+1,N}
&\text{if their lengths $|M|$ and $|M'|$ are equal to $n$,} \\
\emptyset 
&\text{if their lengths are different.}
\end{cases}
$$
Let $\otimes:\text{Ob}\mathbb M_{\text{univ}}\times
\text{Ob}\mathbb C_{\text{univ}}\to
\text{Ob}\mathbb C_{\text{univ}}$ be the map induced by the concatenation
and $\tilde{\mathcal G}_{m+1,N}\times\mathcal G_{n,N}
\to\tilde{\mathcal G}_{m+n+1,N}$
be the homomorphism induced by the juxtaposition.
They yield a morphism
$\mathrm{Mor}(M,M')\times \mathrm{Mor}(X,X')\to \mathrm{Mor}(M\otimes X,M'\otimes X')$.
Put $b_{MXY}:=1\in\mathcal G_{|M|+|X|+|Y|+1,N}$ and $r_M:=id_M$.
Finally we put $V_M=\tilde{\mathcal U}_{m+1,N}
\subset\tilde{\mathcal G}_{m+1,N}=\mathrm{Aut}_{\mathbb M_{\text{univ}}}(M)$
and 
$$t_{MX}:=\sum_{1\leqslant j\leqslant n}
\sum_{0\leqslant i\leqslant m+j-1}t^{i+1,j+m+1}
\in {\frak t}_{m+n+1,N}$$
for $|M|=m$ and $|X|=n$.
Then it can be shown that $\mathbb M_{\text{univ}}$ forms an IMC over
the $C_N$-IBMC $\mathbb C_{\text{univ}}$,
which is universal in the following sense:
if $\mathbb M$ is an IMC over a $C_N$-IBMC $\mathbb C$
with distinguished objects 
$M\in\text{Ob}\mathbb M$ and $X\in\text{Ob}\mathbb C$,
then there exists unique functors $\mathbb M_{\text{univ}}\to\mathbb M$
and $\mathbb C_{\text{univ}}\to\mathbb C$
of IMC's over $C_N$-IBMC's which send $\bullet$ to $M$ and $X$ respectively.

\subsection{Automorphisms}
Let $\mathbb C=(\mathcal C,\otimes,I,a,c,l,r,U,t)$ be an IBMC.
Let $g\in\exp\frak t^0_3$.
Set 
$$\tilde a_{XYZ}:=a_{XYZ}\cdot g(a_{XYZ}^{-1}(t_{XY}\otimes id_Z)a_{XYZ},
id_X\otimes t_{YZ})^{-1}.$$
Then the set $\tilde{\mathbb C}=(\mathcal C,\otimes,I,\tilde a,c,l,r,U,t)$
is an IBMC if and only if $g\in \mathrm{GRT}_1(\bf k)$,
i.e. it satisfies \eqref{hexagons-GRT}-\eqref{pentagon-GRT}
(c.f. \cite{Dr}.)
This yields that $\mathrm{GRT}_1(\bf k)$ forms a group by 
\eqref{multiplication-GRT}.

Let $g\in \mathrm{GRT}_1(\bf k)$ and $h\in\exp{\frak t}^0_{3,N}$.
Let $\mathbb C=(\mathcal C,\otimes,I,a,c,l,r,U,t,\sigma)$ be a $C_N$-IBMC
and $\mathbb M=(\mathcal M,\otimes,b,r,V,t)$ be an IMC over it.
Define $\tilde{\mathbb C}$ as above.
Put $\tilde{\mathbb M}=(\mathcal M,\otimes,\tilde b,r,V,t)$
with 
\begin{align*}
\tilde b_{MXY}&=b_{MXY}\cdot h\Bigl(b_{MXY}^{-1}(t_{MX}\otimes id_Y)b_{MXY},
id_M\otimes t_{XY},\\
&b_{MXY}^{-1}(id_{M\otimes X}\otimes\sigma_Y)b_{MXY}(id_M\otimes t_{XY}),
\dots,\\
&\dots,
b_{MXY}^{-1}(id_{M\otimes X}\otimes\sigma^{N-1}_Y)b_{MXY}(id_M\otimes t_{XY})
\Bigr)^{-1}.
\end{align*}

\begin{lem}
The new set $\tilde{\mathbb M}$ is an IMC over
$\tilde{\mathbb C}$ if and only if $(g,h)\in \mathrm{GRTM}_{(\bar 1,1)}(N,\bf k)$,
\end{lem}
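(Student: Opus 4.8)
The plan is to translate each IMC axiom for $\tilde{\mathbb M}$ into one of the defining equations of $\mathrm{GRTM}_{(\bar 1,1)}(N,\bf k)$, carrying out the translation in the universal example. Since $g\in\mathrm{GRT}_1(\bf k)$ is assumed, the preceding construction already shows that $\tilde{\mathbb C}$ is an IBMC, so only the module-level axioms (I)--(IV) for $\tilde b$ need be examined. By the universal property an arbitrary IMC $\mathbb M$ over a $C_N$-IBMC $\mathbb C$ with distinguished objects $M,X$ receives a functor from $(\mathbb M_{\text{univ}},\mathbb C_{\text{univ}})$, and this functor is compatible with the twist, sending $\tilde{\mathbb M}_{\text{univ}}$ to $\tilde{\mathbb M}$. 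Hence for the ``if'' direction it suffices to verify the axioms in $\mathbb M_{\text{univ}}$ and push the resulting morphism identities forward; for the ``only if'' direction it suffices to specialize to $\mathbb M_{\text{univ}}$, where $\mathrm{Aut}_{\mathbb M_{\text{univ}}}$ embeds into $\exp\frak t_{n,N}$ and the categorical identities are therefore equivalent to identities on $(g,h)$.

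First I would evaluate the axioms in $\mathbb M_{\text{univ}}$ on the generating objects $M=X=Y=Z=\bullet$, using $b_{MXY}=1$, $a_{XYZ}=1$, $c_{XY}=\sigma_{|X|,|Y|}$ and the explicit formulas for $t_{MX}$ and for the $C_N$-action. Under these substitutions every factor $\tilde b_{\bullet\bullet\bullet}$ collapses to a single $h(\cdots)^{-1}$ and $\tilde a_{\bullet\bullet\bullet}$ to a single $g(\cdots)^{-1}$, whose arguments are the images of $A,B(0),\dots,B(N-1)$ under the index-insertion maps $x\mapsto x^{1,2,34},x^{12,3,4},x^{1,23,4},x^{1,2,3},x^{2,3,4}$. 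With this dictionary the mixed pentagon axiom of (I) becomes exactly the mixed pentagon equation \eqref{mixed pentagon-GRTM}, while its triangle axioms degenerate the middle object and force the normalization $c_{B(0)}(h)=0$. The octagon axiom (II) becomes the octagon equation \eqref{octagon-GRTM}, the conjugations by $id_{M\otimes X}\otimes\sigma_Y$ producing the shift automorphisms $\tau_a$ in the arguments of $h$. The two compatibility relations for $t_{MX}$ in (IV) reproduce the special action condition \eqref{special-b-GRTM} together with a functoriality identity that holds automatically, and axiom (III) is automatic since the twist leaves each $V_M$ unchanged. Collecting these equivalences gives $\tilde{\mathbb M}$ an IMC over $\tilde{\mathbb C}$ if and only if $(g,h)$ satisfies \eqref{mixed pentagon-GRTM}, \eqref{octagon-GRTM}, \eqref{special-b-GRTM} and $c_{B(0)}(h)=0$, i.e. $(g,h)\in\mathrm{GRTM}_{(\bar 1,1)}(N,\bf k)$.

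The main obstacle will be the mixed pentagon computation, in which both twists intervene at once: one must expand $\tilde b_{MXY}\otimes id_Z$, $\tilde b_{M,X\otimes Y,Z}$, $\tilde b_{M\otimes X,Y,Z}$, $\tilde b_{M,X,Y\otimes Z}$ and $id_M\otimes\tilde a_{XYZ}$, keep track of the (here trivial) conjugating $b$'s and $a$'s, and check that the resulting cocycle identity among the four $h$-factors and the single $g$-factor is precisely \eqref{mixed pentagon-GRTM}. This is the module-category analogue of Drinfeld's pentagon computation for $\mathrm{GRT}_1(\bf k)$, made more delicate by the asymmetric role of the anchor (module) strand and by the fact that the arguments of $h$ are assembled from the $t_{MX}$'s rather than from a single symmetric braiding. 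A secondary point of care is the octagon axiom, where the $N$ twists $id_{M\otimes X}\otimes\sigma_Y^a$ entering the definition of $\tilde b$ must be matched term by term with the cyclic relabelling $B(a)\mapsto B(a+1)$ encoded in \eqref{octagon-GRTM}. Once these identifications are in place, the equivalence follows by assembling the axioms and invoking the injectivity of $\mathrm{Aut}_{\mathbb M_{\text{univ}}}\hookrightarrow\exp\frak t_{n,N}$.
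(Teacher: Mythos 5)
Your overall skeleton (matching each module-level axiom to one defining equation, and using the universal example $(\mathbb C_{\text{univ}},\mathbb M_{\text{univ}})$ to get the converse) is the same as the paper's, but two of your steps do not work as stated. First, your reduction of the ``if'' direction to $\mathbb M_{\text{univ}}$ is invalid: the universal property constructed in the paper is generated by a \emph{single} object $\bullet$ on each side, so the functor $\mathbb M_{\text{univ}}\to\mathbb M$ determined by $(M,X)$ only reaches objects of the form (bracketed) $M\otimes X^{\otimes n}$. The axioms of $\tilde{\mathbb M}$ quantify over arbitrary tuples $(M,X,Y,Z)$ with $X,Y,Z\in\mathrm{Ob}\,\mathcal C$ pairwise distinct, and such tuples are not in the image of any functor out of the one-generator universal object; checking the axioms on $M=X=Y=Z=\bullet$ and ``pushing forward'' therefore proves nothing about them. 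What the paper's one-line argument compresses is a different mechanism: for \emph{fixed} arbitrary objects, the elements $b_{MXY}^{-1}(t_{MX}\otimes id_Y)b_{MXY}$, $id_M\otimes t_{XY}$ and their $\sigma_Y^a$-conjugates satisfy the defining relations of $\frak t^0_{3,N}$ (resp. $\frak t^0_{4,N}$ for four objects) \emph{because $\mathbb M$ is an IMC over $\mathbb C$}; hence they induce a group morphism from $\exp\frak t^0_{4,N}$ into the relevant automorphism group, and the universal equations \eqref{mixed pentagon-GRTM}, \eqref{octagon-GRTM}, \eqref{special-b-GRTM} map to the categorical identities. (Alternatively you could repair your route by constructing a multi-colored universal IMC, but that object is not in the paper.) Relatedly, the second equality in (IV) for $\tilde b$ is not ``automatic by functoriality'': it holds because each argument fed into $h$ commutes with $t_{M\otimes X,Y}+t_{MX}\otimes id_Y$, which is the commutation argument the paper spells out.

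Second, your claim that the triangle axioms force $c_{B(0)}(h)=0$ is false. Degenerating the middle object ($Y=I$, so $t_{XI}=0$) turns $\tilde b_{MXI}$ into $b_{MXI}\cdot h(t_{MX},0,\dots,0)^{-1}=b_{MXI}\cdot\exp\{-c_A(h)\,t_{MX}\}$, so the triangle axiom forces $c_A(h)=0$ — a condition already implied by the mixed pentagon equation (Lemma \ref{c=0}) — and says nothing about $c_{B(0)}(h)$. In fact no categorical axiom can force $c_{B(0)}(h)=0$: the pair $(1,e^{\lambda B(0)})$ satisfies \eqref{mixed pentagon-GRTM}, \eqref{octagon-GRTM} and \eqref{special-b-GRTM} for any $\lambda$, so twisting by it preserves the IMC structure while violating the normalization. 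This is exactly the discrepancy ${\frak g}^{\text{original}}={\frak g}\oplus\bold k\cdot B(0)$ recorded in the footnote to the definition of $\frak{grtm}_{(\bar 1,1)}(N,\bold k)$; the equivalence proved by the paper is between the axioms and the three equations, with $c_{B(0)}(h)=0$ imposed as a separate normalization rather than derived.
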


\begin{proof}
The equation \eqref{mixed pentagon-GRTM}, \eqref{octagon-GRTM} and
\eqref{special-b-GRTM}
guarantee respectively the mixed pentagon axiom in (I),
the octagon axiom in (II) and the first equality in (IV).
As for the second equality in (IV), it is automatic because
$t_{MX}\otimes id_Y$ and $(id_{M\otimes  X}\otimes\sigma^{a}_Y)b_{MXY}(id_M\otimes t_{XY})b_{MXY}^{-1}$
($a\in C_N$)
commute with $t_{M\otimes X,Y}+t_{MX}\otimes id_Y$.

Conversely by taking  $(\mathbb C,\mathbb M)=(\mathbb C_{\text{univ}},\mathbb M_{\text{univ}})$,
one sees that the presentations (I)-(IV) imply
the relations \eqref{mixed pentagon-GRTM}-\eqref{special-b-GRTM}.
\end{proof}

As a corollary we get 
\begin{prop}\label{GRTM forms a group}
The set $\mathrm{GRTM}_{(\bar 1,1)}(N,\bf k)$ forms a group by 
the multiplication \eqref{multiplication-GRTM}.
\end{prop}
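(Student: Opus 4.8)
The plan is to deduce the group structure formally from the preceding lemma, in exact parallel with the way Drinfeld's theory derives that $\mathrm{GRT}_1(\bf k)$ is a group from the corresponding statement about IBMCs. The preceding lemma identifies $\mathrm{GRTM}_{(\bar 1,1)}(N,\bf k)$ with the set of those pairs $(g,h)$ for which the twisting operation $(\mathbb C,\mathbb M)\mapsto(\tilde{\mathbb C},\tilde{\mathbb M})$ sends every IMC over a $C_N$-IBMC to an IMC over a $C_N$-IBMC. I will read off the group axioms from the observation that such structure-preserving twists are closed under composition, contain the identity twist, and are invertible, together with the fact that the composite of two twists is again a twist whose defining pair is the product \eqref{multiplication-GRTM}.

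The key step is to compute the composite of two twists. Composing the twist by $(g_2,h_2)$ with the twist by $(g_1,h_1)$ produces new constraints $\tilde{\tilde a}$ and $\tilde{\tilde b}$, and I claim these coincide with the single twist of $(\mathbb C,\mathbb M)$ by the pair $(g_1,h_1)\circ(g_2,h_2)$ of \eqref{multiplication-GRTM}, consistently with the opposite-homomorphism property recorded after the definition of $\mathrm{GRTM}_{(\bar 1,1)}(N,\bf k)$. For the associativity constraint this is precisely Drinfeld's computation, which already yields the first component $g_2(A,B)\cdot g_1(A,\mathrm{Ad}(g_2^{-1})(B))$ and the group law on $\mathrm{GRT}_1(\bf k)$ via \eqref{multiplication-GRT}. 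For the module constraint the point is that the arguments
$$b_{MXY}^{-1}(t_{MX}\otimes id_Y)b_{MXY},\quad id_M\otimes t_{XY},\quad b_{MXY}^{-1}(id_{M\otimes X}\otimes\sigma_Y^a)b_{MXY}(id_M\otimes t_{XY})$$
fed into $h_1$ must be evaluated relative to the already-twisted constraint $\tilde b=b\cdot h_2(\cdots)^{-1}$, so they differ from the untwisted ones by conjugation by $h_2$ and by the cyclic shifts $\sigma^a$; tracking these conjugations through the $N$ slots $B(0),\dots,B(N-1)$ of $h$ reproduces exactly the $\tau_a$-shifted factors $\mathrm{Ad}(\tau_a h_2^{-1})B(a)$ appearing in the second component of \eqref{multiplication-GRTM}.

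Granting the composition formula, the group axioms follow by specialization to the universal pair $(\mathbb C_{\text{univ}},\mathbb M_{\text{univ}})$. For closure, suppose $(g_1,h_1)$ and $(g_2,h_2)$ lie in $\mathrm{GRTM}_{(\bar 1,1)}(N,\bf k)$; by the preceding lemma each of the two successive twists preserves the IMC-over-$C_N$-IBMC structure, hence so does their composite. Since that composite equals the single twist by $(g_1,h_1)\circ(g_2,h_2)$, the converse direction of the lemma, applied to $(\mathbb C_{\text{univ}},\mathbb M_{\text{univ}})$, shows that $(g_1,h_1)\circ(g_2,h_2)$ again lies in $\mathrm{GRTM}_{(\bar 1,1)}(N,\bf k)$. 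The identity twist is $(1,1)\in\mathrm{GRTM}_{(\bar 1,1)}(N,\bf k)$, and associativity of $\circ$ is inherited from associativity of composition of twists. For inverses, each twisting operation is invertible as a transformation of the constraint data; its inverse is again a twist by a uniquely determined pair $(g',h')$, which preserves the structure when applied to the universal object and therefore lies in $\mathrm{GRTM}_{(\bar 1,1)}(N,\bf k)$.

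The main obstacle is the bookkeeping in the composite-twist calculation for $\tilde{\tilde b}$: one must verify that the conjugations by $h_2$ and by the powers $\sigma^a$ of the $C_N$-action, when propagated through the $N$ slots $B(0),\dots,B(N-1)$ of $h_1$, assemble precisely into the $\tau_a$-twisted $\mathrm{Ad}(h_2^{-1})$ pattern of \eqref{multiplication-GRTM}, and that the second identity in axiom (IV) remains automatic after twisting. Functoriality of all the structure morphisms ensures that the identities verified on $(\mathbb C_{\text{univ}},\mathbb M_{\text{univ}})$ capture the defining relations \eqref{mixed pentagon-GRTM}–\eqref{special-b-GRTM} in general, so no loss of information occurs in passing to the universal model.
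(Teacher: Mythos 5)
Your proposal is correct and follows essentially the same route as the paper: the paper likewise obtains the Proposition as an immediate corollary of the preceding lemma, reading off closure (and the remaining group axioms) from the fact that composing the twists $(\mathbb C,\mathbb M)\mapsto(\tilde{\mathbb C},\tilde{\mathbb M})$ by $(g_2,h_2)$ and then $(g_1,h_1)$ is the twist by $(g_1,h_1)\circ(g_2,h_2)$ of \eqref{multiplication-GRTM}, with the converse direction of the lemma applied to $(\mathbb C_{\text{univ}},\mathbb M_{\text{univ}})$ guaranteeing no loss of information. Your write-up simply makes explicit the bookkeeping (the $\mathrm{Ad}(\tau_a h_2^{-1})$ pattern and the opposite-homomorphism convention) that the paper leaves implicit.
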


For $n\geqslant 1$ define $\mathrm{GRTM}_{(\bar 1,1)}(N,{\bf k})^{(n)}$
to be the set of $(g,h)\pmod{\deg n}\in\exp{\frak t}^0_3\times\exp{\frak t}^0_{3,N}\pmod{\deg n}$ which satisfies all the defining equations of
$\mathrm{GRTM}_{(\bar 1,1)}(N,{\bf k})$ modulo $\deg n$.
By considering all IMC $\mathbb M$ over $C_N$-IBMC $\mathbb C$ such that
$\Gamma^{n+1}U_X=\Gamma^{n+1}V_M=\{1\}$
($\Gamma^n$: the $n$-th term of lower central series)
holds for any $X\in\text{Ob}\mathbb C$ and $M\in\text{Ob}\mathbb M$,
we see that $\mathrm{GRTM}_{(\bar 1,1)}(N,{\bf k})^{(n)}$ forms an algebraic group.
The following was required to prove Theorem \ref{pentagon and octagon}.

\begin{lem}\label{surjection}
The natural morphism 
$\mathrm{GRTM}_{(\bar 1,1)}(N,{\bf k})\to \mathrm{GRTM}_{(\bar 1,1)}(N,{\bf k})^{(n)}$
is surjective.
\end{lem}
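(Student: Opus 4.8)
The plan is to deduce the statement from the structure of $\mathrm{GRTM}_{(\bar 1,1)}(N,{\bf k})$ as a pro-unipotent algebraic group presented as the inverse limit of its finite-dimensional truncations: one has $\mathrm{GRTM}_{(\bar 1,1)}(N,{\bf k})=\varprojlim_m \mathrm{GRTM}_{(\bar 1,1)}(N,{\bf k})^{(m)}$, the transition maps being the evident truncations $r_m:\mathrm{GRTM}_{(\bar 1,1)}(N,{\bf k})^{(m+1)}\to\mathrm{GRTM}_{(\bar 1,1)}(N,{\bf k})^{(m)}$. Since a point of the inverse limit lying over a prescribed $x\in\mathrm{GRTM}_{(\bar 1,1)}(N,{\bf k})^{(n)}$ can be built by choosing, one degree at a time, a preimage of $x$ under $r_n$, then a preimage of that under $r_{n+1}$, and so on, it suffices to prove that every transition map $r_m$ is surjective. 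First I would therefore fix $m$ and concentrate on $r_m$.

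Each $\mathrm{GRTM}_{(\bar 1,1)}(N,{\bf k})^{(n)}$ is, by the discussion following Proposition \ref{GRTM forms a group} (realizing it through IMC's over $C_N$-IBMC's with $\Gamma^{n+1}U_X=\Gamma^{n+1}V_M=\{1\}$), a connected unipotent algebraic group over $\bf k$; in characteristic $0$ its exponential map identifies it, as a variety, with its Lie algebra, which is the truncation $\frak{grtm}_{(\bar 1,1)}(N,{\bf k})^{(n)}$ of the graded Lie algebra $\frak{grtm}_{(\bar 1,1)}(N,{\bf k})$. The map $r_m$ is a homomorphism of such groups, and its differential is the corresponding truncation $\frak{grtm}_{(\bar 1,1)}(N,{\bf k})^{(m+1)}\to\frak{grtm}_{(\bar 1,1)}(N,{\bf k})^{(m)}$. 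The key point I would exploit is that, because the defining equations \eqref{mixed pentagon}, \eqref{octagon} and \eqref{special-b} of $\frak{grtm}_{(\bar 1,1)}(N,{\bf k})$ are homogeneous, a solution modulo degree $n$ is nothing but a collection of homogeneous solutions in each degree $<n$; hence $\frak{grtm}_{(\bar 1,1)}(N,{\bf k})^{(n)}=\bigoplus_{d<n}\frak{grtm}_{(\bar 1,1)}(N,{\bf k})_d$ and the differential of $r_m$ is the plain projection forgetting the degree-$m$ summand, which is visibly surjective with kernel $\frak{grtm}_{(\bar 1,1)}(N,{\bf k})_m$.

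Surjectivity of $r_m$ would then follow from the general fact that a homomorphism of connected unipotent algebraic groups in characteristic $0$ is surjective as soon as its differential is, the exponentials on source and target intertwining $r_m$ with its (surjective) differential. Chaining these surjections as above yields the surjectivity of $\mathrm{GRTM}_{(\bar 1,1)}(N,{\bf k})\to\mathrm{GRTM}_{(\bar 1,1)}(N,{\bf k})^{(n)}$, which is the assertion.

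I expect the only genuine obstacle to lie in the identification of $\mathrm{GRTM}_{(\bar 1,1)}(N,{\bf k})^{(n)}$ as a \emph{reduced}, connected unipotent algebraic group whose Lie algebra is exactly $\frak{grtm}_{(\bar 1,1)}(N,{\bf k})^{(n)}$: this amounts to checking that linearizing the group equations \eqref{mixed pentagon-GRTM}, \eqref{octagon-GRTM}, \eqref{special-b-GRTM} at the identity reproduces the Lie equations \eqref{mixed pentagon}, \eqref{octagon}, \eqref{special-b} with no spurious tangent directions, so that the exponential is honestly a variety isomorphism intertwining $r_m$ with a linear projection. Granting this smoothness—which is precisely what the IMC realization of the appendix is designed to supply, the associated Lie algebra of $\mathrm{GRTM}_{(\bar 1,1)}(N,{\bf k})$ being $\frak{grtm}_{(\bar 1,1)}(N,{\bf k})$—the remaining steps are formal.
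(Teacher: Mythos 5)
Your proof is correct and follows essentially the same route as the paper's: both arguments pass to the graded Lie algebra, use homogeneity of the defining equations to identify the Lie algebra of the truncated group with $\oplus_{k=1}^{n}\frak{grtm}_{(\bar 1,1)}(N,{\bf k})^{(k)}$ so that the induced Lie morphism is the manifestly surjective projection $\prod_{k=1}^{\infty}\frak{grtm}_{(\bar 1,1)}(N,{\bf k})^{(k)}\to\oplus_{k=1}^{n}\frak{grtm}_{(\bar 1,1)}(N,{\bf k})^{(k)}$, and then transfer surjectivity back to the groups via the exponential for (pro-)unipotent groups in characteristic $0$. The only difference is organizational: the paper applies this in a single step to the pro-unipotent morphism $\mathrm{GRTM}_{(\bar 1,1)}(N,{\bf k})\to\mathrm{GRTM}_{(\bar 1,1)}(N,{\bf k})^{(n)}$, whereas you first reduce to the finite-dimensional transition maps and chain the liftings through the inverse limit, which is a repackaging of the same argument.
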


\begin{proof}
This is a morphism of pro-unipotent algebraic group, which induces
a Lie algebra morphism
$$\frak{grtm}_{(\bar 1,1)}(N,{\bf k})=\prod_{k=1}^\infty
\frak{grtm}_{(\bar 1,1)}(N,{\bf k})^{(k)}\to
\frak{grtm}^{(n)}_{(\bar 1,1)}(N,{\bf k})=\oplus_{k=1}^n
\frak{grtm}_{(\bar 1,1)}(N,{\bf k})^{(k)}.
$$
Here $\frak{grtm}_{(\bar 1,1)}(N,{\bf k})^{(k)}$ means the degree $k$-component
of $\frak{grtm}_{(\bar 1,1)}(N,{\bf k})$.
Since the Lie algebra morphism is surjective, so is the pro-algebraic group
morphism.
\end{proof}

\section{Erratum of \cite{E}}\label{Erratum}
We use this opportunity to correct some errors
in \cite{E}.
\begin{enumerate}
\item
The first two formulae in \cite{E} page 400 should be replaced by
$$
\Phi_{\mathrm KZ}^{0,1,23}\Phi_{\mathrm KZ}^{01,2,3}=
\Phi_{\mathrm KZ}^{1,2,3}\Phi_{\mathrm KZ}^{0,12,3}\Phi_{\mathrm KZ}^{0,1,2}
$$
and
$$
\Psi_{\mathrm KZ}^{0,1,23}\Psi_{\mathrm KZ}^{01,2,3}=
\Phi_{\mathrm KZ}^{1,2,3}\Psi_{\mathrm KZ}^{0,12,3}\Psi_{\mathrm KZ}^{0,1,2}.
$$
\item
For a ring $R$ and $N\geqslant 1$ the definition of the ring $R(N)$ in \cite{E}\S 6.2
should be read as
follows\footnote{The first author is grateful to I. Marin for pointing out the inconsistency of
the definition in \cite{E}.}:
$R(N) = ({\bf Z}/N{\bf Z})\times R$, with the following operations. The sum is
given by
$$(a,r)+(a',r') = (a+a',r+r'+\sigma(a,a'))$$
and the product is given by
$$(a,r)(a',r') = (aa',\tilde a r' + \tilde a' r + Nrr'+\pi(a,a')),$$
where
$a\mapsto \tilde a$ is the map $({\bf Z}/N{\bf Z})\to\{0,1,\ldots,N-1\}$
inverse to the \lq reduction modulo $N$' map, and $\sigma,\pi:({\bf Z}/N{\bf Z})^2\to{\bf Z}$
are defined by $\widetilde{a+a'} = \tilde a+\tilde a'+N\sigma(a,a')$,
$\widetilde{aa'} = \tilde a\tilde a'+N\pi(a,a')$.
\end{enumerate}


\end{document}